\newcommand{\Z}{\mathbb{Z}}
\newcommand{\Q}{\mathbb{Q}}
\newcommand{\R}{\mathbb{R}}
\DeclareMathOperator{\coker}{coker}
\newcommand{\Gal}{\text{Gal}}
\newcommand{\Hom}{\text{Hom}}
\newcommand{\Ext}{\text{Ext}}
\newcommand{\Tor}{\text{Tor}}
\DeclareMathOperator{\tr}{tr}
\newcommand{\id}{\text{id}}
\renewcommand{\to}{\longrightarrow}
\renewcommand{\mapsto}{\longmapsto}
\newcommand{\onto}{\twoheadrightarrow}
\newcommand{\into}{\xhookrightarrow{}}
\newcommand{\simto}{\stackrel{\sim}{\to}}
\renewcommand{\P}{\mathfrak{P}}
\newcommand{\m}{\mathfrak{m}}
\newcommand{\GL}{\text{GL}}
\newcommand{\SL}{\text{SL}}
\newcommand{\blank}{{-}}
\renewcommand{\bar}{\overline}
\renewcommand{\tilde}{\widetilde}
\newcommand*\fixitem {\item[]%
  \refstepcounter{enumi}\hskip-\leftmargin\labelenumi\hskip\labelsep}
\crefname{equation}{}{}
\Crefname{equation}{}{}
\crefname{enumi}{}{}
\Crefname{enumi}{}{}
\newcommand{\questionbox}{{$\square$\kern-0.58em{\raisebox{0.08em}{\tiny\textbf{?}}}}}
\newcommand{\A}{\mathbb{A}}
\renewcommand{\P}{\mathbb{P}}
\newcommand{\F}{\mathbb{F}}
\DeclareMathOperator{\Ind}{Ind}
\DeclareMathOperator{\cInd}{c-Ind}
\DeclareMathOperator{\Sym}{Sym}
\DeclareMathOperator{\Map}{Map}
\newcommand{\otimesL}{\otimes^\mathbb{L}}
\newcommand{\ad}{\text{ad}}
\newcommand{\Gcal}{\mathcal{G}}
\newcommand{\Hcal}{\mathcal{H}}
\newcommand{\Tbb}{\mathbb{T}}
\DeclareMathOperator{\St}{St}
\newcommand{\Frob}{\text{Frob}}
\newcommand{\rig}{\text{rig}}
\newcommand{\Brig}[2][]{%
\ifthenelse{\equal{#2}{}}{\mathbf{B}_{\rig,#1}^{\dagger}}{\mathbf{B}_{\rig,#1}^{\dagger, #2}}%
}
\newcommand{\diamondp}{S}
\DeclareMathOperator{\MF}{MF}
\newcommand{\loccit}{\textit{loc.~ cit.}}
\newcommand{\unr}[1]{\mathrm{unr}_{{#1}}}
\begin{document}
\pagenumbering{arabic}

\title{The $p$-arithmetic homology of mod $p$ representations of $\GL_2(\Q_p)$}
\author{Guillem Tarrach}

\begin{abstract}
    We compute the non-Eisenstein systems of Hecke eigenvalues contributing to the $p$-arithmetic homology of irreducible smooth mod $p$ representations $\pi$ of $\GL_2(\Q_p)$ and to the cohomology of their duals.
    We show that in most cases they are associated to odd irreducible 2-dimensional Galois representations whose local component at $p$ corresponds under the mod $p$ local Langlands correspondence to a smooth representation that contains $\pi$ as a subrepresentation.
\end{abstract}

\maketitle

\setcounter{tocdepth}{1}
\tableofcontents

\section{Introduction}

Let $p \geq 5$ be a prime number and $N \geq 3$ an integer coprime to $p$. Let $\Gamma_1^p(N)$ be the subgroup of matrices in $\GL_2(\Z[1/p])$ that have positive determinant and are congruent modulo $N \Z[1/p]$ to a matrix of the form $\begin{pmatrix} * & * \\ 0 & 1\end{pmatrix}$.
The goal of this article is to compute the systems of Hecke eigenvalues contributing to the homology of $\Gamma_1^p(N)$ with coefficients in the irreducible mod $p$ representations of $\GL_2(\Q_p)$ and the cohomology of their duals.
More specifically, we prove the following local-global compatibility result.

\begin{theorem}\label{thm: 1}
    Let $\pi$ be an irreducible smooth mod $p$ representation of $\GL_2(\Q_p)$ over $\bar \F_p$, $\pi^\vee$ its abstract $\bar \F_p$-dual. Then:
    \begin{enumerate}
        \item\label{item: thm 1 i} The $p$-arithmetic homology $H_*(\Gamma_1^p(N), \pi)$ and cohomology $H^*(\Gamma_1^p(N), \pi^\vee)$ are finite-dimensional and vanish in degrees outside the range $[0,3]$.
        \item\label{item: thm 1 ii} To any system of Hecke eigenvalues in these spaces, one can associate a 2-dimensional odd semisimple mod $p$ Galois representation of $\Gal(\bar \Q / \Q)$ satisfying the usual relations at primes not dividing $pN$.

        \item\label{item: thm 1 iii} Let $\rho \colon \Gal(\bar \Q / \Q) \to \GL_2(\bar \F_p)$ be a 2-dimensional odd irreducible Galois representation.
        Then, $\rho$ contributes to $H_*(\Gamma_1^p(N), \pi)$ and $H^*(\Gamma_1^p(N), \pi^\vee)$ if and only if $N$ is a multiple of the minimal level $N(\rho)$ attached to $\rho$ by Serre \cite{serre_modularity}, and one of the following is satisfied:
        \begin{enumerate}[label=(\alph*)]
            \item\label{item: thm 1 iii a} $\pi$ is a subrepresentation of the representation associated to the restriction of $\rho$ at a decomposition group $\Gcal_p$ at $p$ by the mod $p$ local Langlands correspondence for $\GL_2(\Q_p)$.
            In this case, $\rho$ contributes to (co)homology in degrees 1, 2 and 3, unless $\pi$ is a twist of the Steinberg representation, in which case $\rho$ contributes to cohomology in degrees 1 and 2.
            \item\label{item: thm 1 iii b} $\pi$ is a character, say $\pi = \chi \circ \det$, and $\rho|_{\Gcal_p}$ is an extension of $\chi \omega^{-1}$ by $\chi$, where we have identified $\chi$ with a character of $\Gcal_p$ via local class field theory and $\omega$ denotes the mod $p$ cyclotomic character.
            In this case, $\rho$ contributes to (co)homology in degrees 2 and 3.
        \end{enumerate}
    \end{enumerate}
\end{theorem}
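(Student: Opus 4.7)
My approach would reduce the computation of $H_*(\Gamma_1^p(N), \pi)$ to the mod $p$ (co)homology of classical modular curves with Serre-weight coefficients via the action of $G := \GL_2(\Q_p)$ on its Bruhat--Tits tree $\Tcal$, and then match with Galois inputs through the weight part of Serre's conjecture and the mod $p$ local Langlands correspondence.

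First, the tree yields, for every smooth $\bar\F_p$-representation $\pi$ of $G$, a short exact sequence
$$0 \to \cInd_{IZ}^{G}\bigl(\pi|_{IZ}\otimes\epsilon\bigr) \to \cInd_{KZ}^{G}\bigl(\pi|_{KZ}\bigr) \to \pi \to 0,$$
with $K=\GL_2(\Z_p)$, $I\subset K$ the Iwahori, $Z$ the center of $G$, and $\epsilon$ an orientation character. Applying $H_*(\Gamma_1^p(N),-)$ together with Shapiro's lemma---identifying $H_*(\Gamma_1^p(N),\cInd_{KZ}^G V)\cong H_*(\Gamma_1(N), V|_K)$ and a similar identity for the Iwahori-induced term in terms of $\Gamma_1(N)\cap\Gamma_0(Np)$---produces a long exact sequence connecting $H_*(\Gamma_1^p(N),\pi)$ to the classical $\bar\F_p$-(co)homology of $\Gamma_1(N)$ and $\Gamma_0(Np)$ with coefficients in the Jordan--H\"older constituents of $\pi|_{KZ}$ and $\pi|_{IZ}$. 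Item~\ref{item: thm 1 i} then follows: the classical mod $p$ (co)homology of $\Gamma_1(N)$ is finite-dimensional and concentrated in degrees $[0,1]$; the tree contributes one additional degree; and a central $\Z$-subgroup in $\Gamma_1^p(N)$ adds a further shift, yielding the virtual cohomological dimension $3$. For item~\ref{item: thm 1 ii}, a Hecke eigensystem contributing to $H_*(\Gamma_1^p(N),\pi)$ yields, through the long exact sequence, an eigensystem in the mod $p$ (co)homology of $\Gamma_1(N)$ or $\Gamma_0(Np)$ with Serre-weight coefficients, to which the classical theory of Deligne, Shimura and Ash--Stevens attaches an odd semisimple $2$-dimensional Galois representation of $\Gal(\bar\Q/\Q)$.

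For item~\ref{item: thm 1 iii}, two complementary inputs need to be matched. On the global side, the weight part of Serre's conjecture (Edixhoven, Buzzard--Diamond--Jarvis, Khare--Wintenberger) pins down the set $W(\rho)$ of Serre weights for which $\rho$ is modular at minimal level $N(\rho)$. On the local side, the Breuil--M\'ezard-type compatibility of mod $p$ LLC for $\GL_2(\Q_p)$ with Serre's recipe identifies $W(\rho|_{\Gcal_p})$ with the Jordan--H\"older constituents of $\pi(\rho|_{\Gcal_p})|_{KZ}$, where $\pi(\rho|_{\Gcal_p})$ denotes the (possibly reducible) smooth representation attached to $\rho|_{\Gcal_p}$. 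Matching the two recipes produces case~\ref{item: thm 1 iii a}, with the degree range $1$--$3$ extracted from the long exact sequence. Case~\ref{item: thm 1 iii b}, where $\pi=\chi\circ\det$ embeds as the submodule of the reducible principal series $\Ind_B^G(\chi\otimes\chi)$, is detected through the associated short exact sequence of $G$-representations and the coboundary maps it induces on $p$-arithmetic (co)homology, forcing contributions to appear only in degrees $2$ and $3$.

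The main obstacle will be the delicate degree-by-degree analysis in item~\ref{item: thm 1 iii}: in particular the absence of degree $3$ for twists of Steinberg in~\ref{item: thm 1 iii a} and of degrees $0, 1$ in~\ref{item: thm 1 iii b}. Proving these requires (a) an Ihara-type lemma guaranteeing injectivity of level-raising maps on the non-Eisenstein part of $H_*(\Gamma_1(N),\sigma)$, (b) careful separation of ordinary and supersingular Hecke components at level $\Gamma_0(Np)$, and (c) a precise understanding of the socle and cosocle structure of $\pi|_{KZ}$ when $\pi$ is a twist of Steinberg or a character---cases in which $\pi|_{KZ}$ is reducible with nontrivial composition series, in contrast to the irreducibility that holds for supersingular and irreducible-principal-series $\pi$.
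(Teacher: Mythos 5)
There is a genuine gap at the very first reduction. Your tree sequence $0 \to \cInd_{IZ}^{G}(\pi|_{IZ}\otimes\epsilon) \to \cInd_{KZ}^{G}(\pi|_{KZ}) \to \pi \to 0$ is correct, but for $\pi$ irreducible smooth (supersingular, principal series, Steinberg) the restrictions $\pi|_{KZ}$ and $\pi|_{IZ}$ are \emph{infinite-dimensional}, with infinitely many Jordan--H\"older constituents. After Shapiro you are therefore computing $H_*(\Gamma_1(N),\pi|_{K})$-type groups with infinite-dimensional coefficients: these are not finite-dimensional in any obvious way, they are not ``classical cohomology with Serre-weight coefficients'', and passing to constituents would require controlling infinitely many extensions. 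So neither (i) nor the eigensystem analysis in (ii)--(iii) follows from the long exact sequence as you claim. The paper avoids this by going in the opposite direction: it uses the Barthel--Livn\'e presentation $\pi(r,\lambda,\chi)=\cInd_K^G(\Sym^r(k^2)^\vee)/(T-\lambda,S-1)\otimes\chi\omega^r$, checks that $(T,S)$ acts by a regular sequence so that $\pi$ is a \emph{derived} eigenquotient, and hence writes the $p$-arithmetic homology complex as a derived tensor product of the arithmetic homology complex of the finite-dimensional weight $\Sym^r(k^2)^\vee\otimes\omega^s$ against $k[T,S]/(T-\tau,S-\sigma)$. This keeps coefficients finite-dimensional (giving (i), (ii)) and, crucially, retains the Hecke eigenvalue at $p$, which is exactly what the refined weight part of Serre's conjecture needs in order to pin down which $\rho$ occur and to match them with the $K$-socle (not the full $K$-restriction, as you write) of the mod $p$ local Langlands representation.

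The second gap is that the hardest part of (iii) --- the exact degrees for twists of Steinberg (1,2 only, no 3) and of the trivial character (2,3 only, no 0,1) --- is precisely what you defer to ``the main obstacle'' without an argument. In the paper this rests on concrete inputs you do not supply: the explicit $k[T,S]$-equivariant maps $\pi(p-1,1,\chi)\to\pi(0,1,\chi)$ and $\pi(0,1,\chi)\to\pi(p-1,1,\chi)$, the identification of the induced maps on arithmetic homology with the Edixhoven--Khare group-cohomological Hasse-invariant map and with the $p$-stabilisation/degeneracy maps between levels $\Gamma_1(N)$ and $\Gamma_1(N)\cap\Gamma_0(p)$, the surjectivity of the former after localisation, a Ribet--Wiles level-raising lemma showing the relevant kernel is nonzero at $\m_\rho$, and a Poincar\'e duality argument transferring these statements between the two non-generic cases. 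Your appeal to ``an Ihara-type lemma'' and to coboundary maps ``forcing'' contributions into degrees 2 and 3 is an assertion of the conclusion, not a proof; in particular the vanishing in degrees 0 and 1 for $\chi\circ\det$ is exactly the surjectivity of the Hasse-invariant map localised at $\rho$, and the nonvanishing in degrees 2 and 3 is exactly the Ribet--Wiles input, neither of which follows from the tree sequence alone.
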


The proof of the theorem is obtained by combining the explicit construction of the irreducible mod $p$ representations of $\GL_2(\Q_p)$ due to Barthel--Livn\'e \cite{barthel_livne} and Breuil \cite{breuil_sur_quelques}, a result relating $p$-arithmetic homology to arithmetic homology in the spirit of \cite{kohlhaase_schraen} and \cite{S_arithmetic_cohomology}, and classical results on the weight part of Serre's conjecture.
These are already enough to prove the generic case where $\pi$ is supersingular or principal series.
The cases of (twists of) the trivial and Steinberg representations require more work, and involve the group cohomological analogue of multiplication of mod $p$ modular forms by the Hasse invariant studied by Edixhoven--Khare \cite{edixhoven_khare_hasse} and an interpretation of this map in terms of the representation theory of the local group $\GL_2(\Q_p)$.

\subsection{Notation and conventions}

Write $G = \GL_2(\Q_p)$, $K = \GL_2(\Z_p)$ and $Z$ for the center of $G$, so that $Z \simeq \Q_p^\times$.
Let $\alpha = \begin{pmatrix} 1 & 0 \\ 0 & p \end{pmatrix} \in G$ and $\beta = \begin{pmatrix} p & 0 \\ 0 & p \end{pmatrix} \in Z$.
Write also $B$ for the subgroup of upper-triangular matrices in $G$ and $I = K \cap \alpha K \alpha^{-1}$ for the Iwahori subgroup of matrices in $K$ that are upper-triangular modulo $p$.
Let $G^+ \subseteq G$ be the submonoid of matrices whose entries lie in $\Z_p$, and $G^- = (G^+)^{-1}$.
We will write $\omega$ for the character $\Q_p^\times \to \F_p^\times$ defined by $x \mapsto x |x| \mod p$.
Write $k = \bar \F_p^\times$.

We normalise local class field theory so that uniformisers correspond to geometric Frobenii, and for each prime $\ell$ we let $\Frob_\ell$ be the geometric Frobenius corresponding to $\ell$.
Choose embeddings $\bar \Q \into \bar \Q_\ell$ for all $\ell$, and let $\Gcal_\ell$ denote the corresponding decomposition group at $\ell$ in $\Gal(\bar \Q / \Q)$.
We will use our normalisation of local class field theory to identify characters of $\Gcal_\ell$ and $\Q_\ell^\times$ without coming.
Write $\varepsilon \colon \Gal(\bar \Q / \Q) \to k^\times$ for the mod $p$ cyclotomic character, it satisfies $\varepsilon(\Frob_\ell) = \ell^{-1} \mod p$ and $\varepsilon(\Frob_p) = 1$.
Its restriction to $\Gcal_p$ at $p$ corresponds to $\omega$ under local class field theory.
Write $\mathcal{I}_p \subseteq \Gcal_p$ for the inertia subgroup.
Let $\omega_2 \colon \mathcal{I}_p \to \mu_{p^2 - 1} (\bar \Q_p^\times) \subseteq \bar \F_p^\times$ be Serre's fundamental character of level 2, defined by $\omega_2(g) = (g p^{1 / (p^2-1)}) / p^{1/(p^2-1)}$, and for $0 \leq s \leq p$ let $\Ind(\omega_2^s)$ be the irreducible representation of $\Gcal_p$ over $\chi$ with determinant $\omega^s$ and $\Ind(\omega_2^s)|_{\mathcal{I}_p} = \omega_2^s \oplus \omega_2^{ps}$.
All irreducible 2-dimensional representations of $\Gcal_p$ over $k$ are of the form $\Ind(\omega_2^s) \otimes \chi$ for some $s$ as above and character $\chi$.
Given a two-dimensional odd and irreducible representation $\rho$ of $\Gal(\bar \Q / \Q)$ over $k$, we will write $N(\rho)$ for the minimal level attached to $\rho$ by Serre in \cite{serre_modularity}.

Given $b \in k$, we will write $\unr{b}$ for the $k$-valued unramified characters of $\Q_p^\times$ and of $\Gal(\bar \Q_p / \Q_p)$ sending $p$ and $\Frob_p$ respectively to $b$.
Thus, all continuous characters $\Q_p^\times \to k^\times$ are of the form $\omega^a \unr{b}$ for some $b \in k$ and $0 \leq a \leq p-2$.
If $V$ is any representation of $G$ (resp. $K$) and $\chi$ is a $k$-valued continuous character of $\Q_p^\times$ (resp. $\Z_p^\times$), we will write $V \otimes \chi$ instead of $V \otimes (\chi \circ \det)$.

\subsection*{Acknowledgements}

The author is grateful to Jack Thorne for suggesting the question of studying the $p$-arithmetic cohomology of smooth mod $p$ representations of $\GL_2(\Q_p)$ and for his comments on earlier drafts of this article.

\section{Preliminaries}

\subsection{Arithmetic and \texorpdfstring{$p$}{p}-arithmetic (co)homology}
\label{subsection: cohomology preliminaries}

Let $U^p \subseteq \GL_2(\A^{p \infty})$ be a compact open subgroup of the form $\prod_{\ell \nmid N} \GL_2(\Z_\ell) \times U_N$ for some $N \geq 1$ coprime to $p$ and open compact subgroup $U_N \subseteq \prod_{\ell \mid N} \GL_2(\Q_\ell)$.
Assume $U^p$ is neat.
Let $\GL_2(\R)^\circ$ be the subgroup of $\GL_2(\R)$ consisting of matrices with positive determinant.
The (arithmetic) homology of level $U^p K$ of a left $k[K]$-module $M$, is defined as
$$
    H_i(U^p K, M) := \Tor_i^{k[\GL_2(\Q) \times U^p \times K \times \GL_2(\R)^\circ]} (k[\GL_2(\A)], M).
$$
Here, we view $M$ as a $\GL_2(\Q) \times U^p \times K \times \GL_2(\R)^\circ$ module by letting $\GL_2(\Q) \times U^p \times \GL_2(\R)^\circ$ act trivially, and $k[\GL_2(\A)]$ is acted on by $\GL_2(\Q)$ by multiplication on the left and by $U^p \times K \times \GL_2(\R)^\circ$ by multiplication on the right.
The (arithmetic) cohomology of $M$ in level $U^p K$ is defined analogously as
$$
    H_i(U^p K, M) := \Ext^i_{k[\GL_2(\Q) \times U^p \times K \times \GL_2(\R)^\circ]} (k[\GL_2(\A)], M).
$$
Similarly, if $M$ is a $k[G]$-module, the $p$-arithmetic homology and cohomology of $M$ in level $U^p$ are defined as
\begin{align*}
    H_i(U^p, M) & := \Tor_i^{k[\GL_2(\Q) \times U^p \times \GL_2(\Q_p) \times \GL_2(\R)^\circ]} (k[\GL_2(\A)], M), \\
    H^i(U^p, M) & := \Ext^i_{k[\GL_2(\Q) \times U^p \times \GL_2(\Q_p) \times \GL_2(\R)^\circ]} (k[\GL_2(\A)], M).
\end{align*}
For both arithmetic and $p$-arithmetic homology (and similarly for cohomology), one can canonically define complexes computing them as in \cite[Section 5.1]{S_arithmetic_cohomology}, where they were denoted $C^\ad_\bullet(U^p K, M)$ and $C^\ad_\bullet(U^p, M)$; here we will denote them by $C_\bullet(U^p K, M)$ and $C_\bullet(U^p, M)$ respectively.
One can also speak of arithmetic and $p$-arithmetic hyperhomology and hypercohomology of complexes of $k[K]$ or $k[G]$-modules; these are just the derived tensor products and derived Hom corresponding to the Tor and Ext modules above in their corresponding derived category. 
In this article we will only be interested in the case where
$$
    U^p = U^p_1(N) := \prod_{\ell \nmid N} \GL_2(\Z_\ell) \times \prod_{\ell \mid N} \left\{ g \in \GL_2(\Z_\ell) : g \equiv \begin{pmatrix} * & * \\ 0 & 1 \end{pmatrix} \mod \ell^{v_\ell(N)} \right\}
$$
and $N \geq 3$.
In this case, there are canonical isomorphisms
\begin{align*}
    H_*(U^p_1(N) K, \blank) & \simeq H_*(\Gamma_1(N), \blank),
    &
    H^*(U^p_1(N) K, \blank) & \simeq H^*(\Gamma_1(N), \blank),
    \\
    H_*(U^p_1(N), \blank) & \simeq H_*(\Gamma^p_1(N), \blank),
    &
    H^*(U^p_1(N), \blank) & \simeq H^*(\Gamma^p_1(N), \blank),
\end{align*}
where the right-hand sides denote group homology or cohomology, $\Gamma_1(N) \subseteq \SL_2(\Z)$ is the usual congruence subgroup and $\Gamma^p_1(N)$ is defined as in the introduction.
The arithmetic (resp. $p$-arithmetic) (co)homology groups are non-zero in degrees outside the range $[0,1]$ (resp. $[0,3]$).

\subsection{Hecke operators}
\label{subsection: Hecke operators}

Let $H$ is any locally profinite group, $H_0$ a compact open subgroup and $H_+ \subseteq H$ a submonoid containing $H_0$, and write $H_- = H_+^{-1}$.
If $M$ is a (left) $k[H_-]$-module (resp. $k[H_+]$-module), then the $H_0$-coinvariants $M_{H_0}$ (resp. $H_0$-invariants $M^{H_0}$) are naturally a right (resp. left) module for the Hecke algebra $\Hcal(H_+, H_0)_k$, the algebra of smooth compactly supported $H_0$-biinvariant functions $H_+ \to H$ under convolution.

This discussion applies in particular to arithmetic and $p$-arithmetic (co)homology.
Let $U^p$ and $N$ be as in the previous section.
Let $\Tbb(pN)$ denote the abstract unramified Hecke algebra for $\GL_2$ away from $pN$ with coefficients in $\Z$, that is, the restricted tensor product of the local Hecke algebras $\Hcal(\GL_2(\Q_\ell), \GL_2(\Z_\ell))$ with $\ell \nmid pN$.
It is a commutative algebra freely generated by Hecke operators $T_\ell$, corresponding to the double coset of $\begin{pmatrix} 1 & 0 \\ 0 & \ell \end{pmatrix}$, and invertible operators $S_\ell$, corresponding to the double coset of $\begin{pmatrix} \ell & 0 \\ 0 & \ell \end{pmatrix}$, where $\ell \nmid pN$.
Fix also a submonoid $G_+ \subseteq G$ containing $K$.
Let $V$ be a representation of $G_-$ (resp. $G_+$) on a $k$-vector space.
Then, the arithmetic homology $H_*(U^p K, V)$ (resp. the arithmetic cohomology $H^*(U^p K, V)$) is the $U^p K$-coinvariants (resp. invariants) of a representation of $\GL_2(\A^{p \infty}) \times G_+$, and is thus endowed with commuting actions of $\Tbb(pN)$ and $\Hcal(G_+, K)_k$, the latter being a right (resp. left) action.
For us, $\Hcal(G_+, K)_k$ will always be a commutative algebra so we will not distinguish between left and right actions.
Similarly, if $V$ is a representation of $G$ on a $k$-vector space, then the $p$-arithmetic homology $H_*(U^p, V)$ (resp. the $p$-arithmetic cohomology $H^*(U^p, V)$) is endowed with an action of $\Tbb(pN)$.
If $V$ is a representation of $G_-$ (resp. $G$) and $V^\vee$ denotes its (abstract) contragredient, then there are $\Tbb(pN) \otimes \Hcal(G_+, K)$-equivariant (resp. $\Tbb(pN)$-equivariant) isomorphisms $H^*(U^p K, V^\vee) \simeq H_*(U^p K, V)^\vee$ (resp. $H^*(U^p, V^\vee) \simeq H_*(U^p, V)^\vee$).
In particular, the systems of eigenvalues in both spaces are the same, so all the statements for cohomology in \Cref{thm: 1} follow from the corresponding statements for homology.
For this reason, we will now work almost exclusively with homology.

Let $\rho$ be a $k$-valued continuous representation of $\Gal(\bar \Q / \Q)$ unramified outside $N$, and consider the maximal ideal $\m_\rho$ of $\Tbb(pN)$ defined by the kernel of the homomorphism $\Tbb(pN) \to k$ defined by $T_\ell \mapsto \tr \rho(\Frob_\ell)$ and $\ell S_\ell \mapsto \det \rho(\Frob_\ell)$ for $\ell \nmid p$, where $\Frob_\ell$ is a geometric Frobenius at $\ell$.
For example, $\m_{1 \oplus \varepsilon^{-1}}$ is generated by $T_\ell - (1 + \ell), \ell S_\ell - \ell$.
Given a $\Tbb(pN)$-module $M$, we will say that $\rho$ contributes to, or appears in, $M$ is the localisation of $M_{\m_\rho}$ is non-zero.
We will sometimes write $M_\rho$ instead of $M_{\m_\rho}$.
If $V$ is an irreducible representation of $K$, then any system of Hecke eigenvalues in $H^*(\Gamma_1(N), V)$ corresponds to a semisimple 2-dimensional Galois representation as above.
For such a $V$, the cohomology $H^0(\Gamma_1(N), V)$ vanishes unless $V$ is the trivial representation, in which case it is one-dimensional and the Hecke operators act via $T_\ell = 1 + \ell, S_\ell = 1$.
In particular, the only semisimple Galois representation $\rho$ that can contribute to $H^0(\Gamma_1(N), V)$ is $1 \oplus \varepsilon^{-1}$.
In particular, irreducible Galois representations can only contribute to arithmetic cohomology in level $\Gamma_1(N)$ only in degree 1.

Given a character $\chi \colon \Q_p^\times \to k^\times$, write $k(\chi)$ for the $\Tbb(pN)$-module whose underlying module is $k$ and where $T_\ell$ (resp. $S_\ell$) act via $\chi(\ell)$ (resp. $\chi(\ell)^2$).
For any $\Tbb(pN)$-module $M$, write $M(\chi) := M \otimes_k k(\chi)$ for the twist of $M$ by $k(\chi)$.

\subsection{Irreducible mod \texorpdfstring{$p$}{p} representations of \texorpdfstring{$\GL_2(\Q_p)$}{GL\_2(Q\_p)}}
\label{subsection: mod p preliminaries}

In this section we will recall the construction of the smooth irreducible mod $p$ representations of $G$ and some facts about them.
Given $0 \leq r \leq p-1$, consider the representation $\Sym^r (k^2)^\vee$ of $K$ over $k$.
Note that $\Sym^r(k^2)$ naturally extends to a representation of the monoid $G^+$ of matrices with entries in $\Z_p$, and also (perhaps less naturally) to a representation of $KZ$ where $\beta$ acts trivially.
In particular, $\Sym^r (k^2)^\vee$ extends to representations of $G^-$ and of $KZ$.
The first action defines an action of $\Hcal(G^+, K)$ on the compact induction $\cInd_K^G(\Sym^r (k^2)^\vee)$ and the second defines an action of $\Hcal(KZ, K)$.
We let $T$ denote the operator on $\cInd_K^G(\Sym^r (k^2)^\vee)$ corresponding to the double coset of $\alpha$ under the first action and $\diamondp$ the operator corresponding to the double coset of $\beta$ under the second, which is an invertible operator.
Explicitly, these operators are described as follows.
Given $g \in G$ and $v \in \Sym^r (k^2)^\vee)$, let $[g, v]$ denote the element of $\cInd_K^G(\Sym^r (k^2)^\vee)$ that is supported on $gK$ and maps $g$ to $v$.
We will use the same notation for compact inductions from other groups and of other representations.
The Hecke operators above are then defined by the formulas
\begin{align*}
    T [g, v] & = \sum_{x \in K / I} [g x \alpha, \alpha^{-1} x^{-1} v],
    &
    S [g, v] & = [\beta g, v],
\end{align*}

Given a continuous character $\chi \colon \Q_p^\times \to k^\times$ and $\lambda \in k$, define
$$
    \pi(r, \lambda, \chi) := \frac{\cInd_K^G (\Sym^r (k^2)^\vee)}{(T - \lambda, \diamondp - 1)} \otimes_k \chi \omega^r
$$
(we have included the twist by $\omega^r$ in order for this notation to match that of the existing literature).

\begin{remark}\label{remark: twist changes eigenvalues}
    We will later need to consider a variation of this definition that is defined in families.
    Let $R$ be a $k$-algebra.
    One can define Hecke operators $T$ and $S$ on $\cInd_K^G (\Sym^r (k^2)^\vee \otimes_k \chi)$ for any character $\chi \colon G \to R^\times$ in the same way as for $R = k$ and $\chi = 1$, and the natural isomorphism $\cInd_K^G (\Sym^r (k^2)^\vee \otimes_k \chi) \simto \cInd_K^G (\Sym^r (k^2)^\vee) \otimes_k \chi$ intertwines $T$ and $S$ on the source with $T \otimes 1$ and $S \otimes 1$ respectively on the target.
    Moreover, if $b \in R^\times$ and $\lambda \in R$, then there are isomorphisms of representations of $G$
    \begin{align*}
        \frac{\cInd_K^G (\Sym^r (k^2)^\vee \otimes_k R)}{(T - \lambda, S - 1)} \otimes_R (\chi \unr{b})
        & \simto
        \frac{\cInd_K^G (\Sym^r (k^2)^\vee \otimes_k (\chi \unr{b}))}{(T - \lambda, S - 1)}
        \\ & \simto
        \frac{\cInd_K^G (\Sym^r (k^2)^\vee \otimes_k \chi)}{(T - \lambda b, S - b^2)}.
    \end{align*}
    Let us define for $\tau \in R$, $\sigma \in S$ and $s \in \Z$,
    $$
        \tilde \pi(r, \tau, \sigma, s)_R := \frac{\cInd_K^G (\Sym^r (k^2)^\vee \otimes_k \omega^{s} \otimes_k R)}{(T - \tau, S - \sigma)}.
    $$
    In particular, when $R = k$, then $\pi(r, \lambda, \omega^a \unr{b}) \simeq \tilde \pi(r, \tau, \sigma, s)_k$ for $\tau = \lambda b, \sigma = b^2$ and $s = a + r$.
\end{remark}

The following results are due to Barthel--Livn\'e \cite{barthel_livne} and Breuil \cite{breuil_sur_quelques}.

\begin{theorem}\label{thm: reminder of mod p representations}
    \begin{enumerate}
    \fixitem If $(r, \lambda) \neq (0, \pm 1), (p-1, \pm 1)$, then $\pi(r, \lambda, \chi)$ is irreducible.
    \item\label{item: isomorphism to principal series} If $\lambda \neq 0$ and $(r, \lambda) \neq (0, \pm 1)$ then there is a canonical isomorphism
    $$
        \pi(r, \lambda, \chi) \simto \Ind_B^G (\chi \unr{\lambda^{-1}} \otimes \chi \unr{\lambda} \omega^r).
    $$
    Here, $\Ind_B^G(\blank)$ denotes smooth parabolic induction.
    The representation $\pi(r, \lambda, \chi)$ is said to be a principal series representation.
    \item If $r = 0$ and $\lambda =\pm 1$ there is a canonical homomorphism
    $$
        \pi(r, \lambda, \chi) \to \Ind_B^G (\chi \unr{\lambda^{-1}} \otimes \chi \unr{\lambda})
    $$
    with kernel and cokernel isomorphic to $\St \otimes \chi \unr{\lambda}$ and image isomorphic to $\chi \unr{\lambda} \circ \det$.
    Here, $\St$ is the Steinberg representation of $G$ over $k$, i.e. the quotient of $\Ind_B^G(k)$ by the trivial representation $k$.
    \item If $\lambda = 0$, then $\pi(r, 0, \chi)$ is not isomorphic to a principal series representation; it is said to be supersingular.
    \item Any irreducible representation of $G$ over $k$ is isomorphic to a principal series, a supersingular, a twist of the Steinberg, or a twist of the trivial representation.
    \item\label{item: isomorphisms pis ordinary} If $\lambda \neq 0$, the only isomorphisms between various of these representations are
    $$
        \pi(r, \lambda, \chi) \simeq \pi(r, -\lambda, \chi \unr{-1})
    $$
    and for $\lambda \neq \pm 1$
    $$
        \pi(0, \lambda, \chi) \simeq \pi(p-1, \lambda, \chi).
    $$
    \item If $\lambda = 0$, the isomorphisms between various of these representations are
    \begin{align*}
        \pi(r, 0, \chi)
        & \simeq \pi(r, 0, \chi \unr{-1}) \\
        & \simeq \pi(p-1-r, 0, \chi \omega^r) \\
        & \simeq \pi(p-1-r, 0, \chi \omega^r \unr{-1}).
    \end{align*}
\end{enumerate}
\end{theorem}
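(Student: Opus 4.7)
The strategy is to compare $\cInd_K^G(\Sym^r (k^2)^\vee)$ with the principal series $\Ind_B^G$ via Frobenius reciprocity, after first identifying the spherical Hecke algebra $\Hcal(G, K)_k$ as $k[T, S^{\pm 1}]$ via the Cartan decomposition $G = \bigsqcup_{m \geq 0,\, n \in \Z} K \beta^n \alpha^m K$. For (ii), with $\lambda \neq 0$, I would use the Iwasawa decomposition $G = BK$ to describe $\Ind_B^G(\chi_1 \otimes \chi_2)|_K$ explicitly, where $\chi_1 = \chi \unr{\lambda^{-1}}$ and $\chi_2 = \chi \unr{\lambda} \omega^r$, construct a canonical $K$-equivariant map $\Sym^r (k^2)^\vee \to \Ind_B^G(\chi_1 \otimes \chi_2)|_K$ via projection onto the appropriate weight line, and extend by Frobenius reciprocity to a $G$-equivariant map $\phi$. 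A direct calculation with the explicit formula for $T$ shows $T$ and $S$ act on $\phi$ by $\lambda$ and $1$ respectively, giving the factorisation through $\pi(r, \lambda, \chi)$; bijectivity then follows by comparing both sides over the Bruhat cells.

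Once (ii) is established, (i) and (iii) reduce to the standard fact that $\Ind_B^G(\chi_1 \otimes \chi_2)$ is irreducible iff $\chi_1 \neq \chi_2$, which translates to excluding $(r, \lambda) \in \{(0, \pm 1), (p-1, \pm 1)\}$; in the reducible boundary case relevant for (iii), the unique one-dimensional subrepresentation and Steinberg quotient give the description of the image, kernel, and cokernel of $\phi$. For (iv), the generator $[1, v] \in \pi(r, 0, \chi)$ spans a $K$-subrepresentation isomorphic to $\Sym^r (k^2)^\vee$ on which $T$ acts as $0$; no principal series, twist of Steinberg, or character admits such a submodule, so $\pi(r, 0, \chi)$ is of none of these types.

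The main obstacle is the classification (v), which requires two nontrivial ingredients. First, any irreducible smooth $k$-representation $\pi$ of $G$ admits a central character (by a Schur-type argument) and contains an irreducible $K$-subrepresentation, necessarily of the form $\Sym^r (k^2)^\vee \otimes \eta$ for some character $\eta$ and $0 \leq r \leq p-1$; Frobenius reciprocity plus the commutativity of $\Hcal(G, K)_k$ and the central character then realise $\pi$ as a quotient of some $\pi(r, \lambda, \chi)$, reducing to (i)--(iv). Second, the irreducibility of the supersingular $\pi(r, 0, \chi)$ is due to Breuil and requires a delicate analysis of the $K$-filtration on $\cInd_K^G(\Sym^r (k^2)^\vee)/T$; this is the deepest input. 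The isomorphisms in (vi) and (vii) are then identified by matching central characters and $(T, S)$-eigenvalues, with the extra identifications in (vii) reflecting additional $K$-equivariant symmetries of $\cInd_K^G(\Sym^r (k^2)^\vee)/T$ that are absent when $T$ acts invertibly.
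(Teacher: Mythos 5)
The paper does not actually prove this theorem: it is stated as a recollection of known results and the ``proof'' is a citation to Barthel--Livn\'e and Breuil. Your outline is, in effect, a compressed reconstruction of the arguments in those references, and as a plan it is essentially sound: the identification $\Hcal(G,K)_k \simeq k[T,S^{\pm 1}]$ via the Cartan decomposition, the spherical map $\cInd_K^G(\Sym^r(k^2)^\vee) \to \Ind_B^G(\chi_1\otimes\chi_2)$ produced by Frobenius reciprocity from the relevant weight line together with the computation of its $(T,S)$-eigenvalues, the reducibility criterion $\chi_1=\chi_2$ for mod $p$ principal series (which is exactly the boundary cases $(r,\lambda)=(0,\pm1),(p-1,\pm1)$), the supersingular case via the vanishing $T$-eigenvalue on the $K$-socle, and the reduction of the classification to the statement that every irreducible is a quotient of some $\pi(r,\lambda,\chi)$, with Breuil's irreducibility of the supersingulars correctly flagged as the deepest input. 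Two steps are glossed more than the literature allows. First, the existence of a central character is not a ``Schur-type argument'' over $k=\bar\F_p$: the field is countable, so the usual cardinality form of Schur's lemma does not apply; Barthel--Livn\'e simply assume a central character, and removing that hypothesis (as the unqualified statement (v) requires) is a separate theorem of Berger. Second, realising $\pi$ as a quotient of some $\pi(r,\lambda,\chi)$ needs more than commutativity of the Hecke algebra: $\Hom_G(\cInd_K^G(\Sym^r(k^2)^\vee\otimes\eta),\pi)$ is a priori an infinite-dimensional $k[T]$-module, and the existence of a $T$-eigenvalue is obtained in Barthel--Livn\'e by a localisation argument (or by invoking admissibility of irreducibles, again a nontrivial result), not by Frobenius reciprocity alone. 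With those two points repaired by the appropriate citations, your sketch matches the standard proof that the paper is implicitly relying on.
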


In particular, there is a non-zero map, unique up to scalar, $\pi(p-1, 1, \chi) \to \pi(0, 1, \chi)$ (resp. $\pi(0, 1, \chi) \to \pi(p-1, 1, \chi)$), which factors through a twist of the Steinberg (resp. trivial) representation.
In \Cref{section: Hasse invariant}, we will describe these maps explicitly.

\subsection{The mod \texorpdfstring{$p$}{p} Langlands correspondence for \texorpdfstring{$\GL_2(\Q_p)$}{GL\_2(Q\_p)}}

Throughout this section, we assume $p \geq 5$.
We will use the same conventions for the mod $p$ local Langlands correspondence as in \cite{emerton_local_global}, however we also include twists of extensions of the cyclotomic character by the trivial character in our discussion.
Let $\MF$ be Colmez's magical functor, defined by $\MF(\pi) := \mathbf V(\pi) \otimes \omega$ for $\mathbf V$ as in \cite{colmez_representations}.

\begin{theorem}\label{thm: mod p langlands}
    Let $\rho \colon \Gal(\bar \Q_p / \Q_p) \to \GL_2(k)$ be a continuous representation.
    Then, there exists a finite length smooth representation of $G$ over $k$, unique up to isomorphism, satisfying the following properties:
    \begin{enumerate}
        \item $\MF(\pi) \simeq \rho$,
        \item $\pi$ has central character corresponding to $(\det \rho) \omega$ under local class field theory,
        \item $\pi$ has no finite-dimensional $G$-invariant subrepresentations or quotients.
    \end{enumerate}
    More specifically, $\pi$ can be described as follows:
    \begin{enumerate}
        \item If $\rho$ is irreducible, say $\rho = \Ind(\omega_2^{r+1}) \otimes \chi$ with $0 \leq r \leq p-1$, then $\pi = \pi(r, 0, \chi \omega)$ is supersingular.
        
        \item If $\rho \simeq \begin{pmatrix} \chi_1 & * \\ 0 & \chi_2 \end{pmatrix}$ with $\chi_1 \neq \chi_2 \omega^{\pm1}$, then $\pi$ is an extension
        $$
            0 \to \pi(r, \lambda, \chi) \to \pi \to \pi([p-3-r], \lambda^{-1}, \chi \omega^{r+1}) \to 0,
        $$
        where
        \begin{align*}
            \chi_1 & = \omega^r \chi \unr{\lambda}, &
            \chi_2 & = \omega^{-1} \chi \unr{\lambda^{-1}}
        \end{align*}
        with $0 \leq r \leq p-1$, and $[p-3-r]$ is the unique integer between $0$ and $p-2$ that is congruent to $p-3-r$ modulo $p-1$.
        This extension is split if and only if $\rho$ is semisimple.

        \item If $\rho$ is a non-split extension $\begin{pmatrix} \chi & * \\ 0 & \chi \omega^{-1} \end{pmatrix}$,
        then $\pi$ has a unique Jordan--H\"older series, which is of the form
        $$
            0 \subseteq \pi_1 \subseteq \pi_2 \subseteq \pi
        $$
        where $\pi_1 \simeq \St \otimes \chi, \pi_2 / \pi_1 \simeq \chi \circ \det$ and $\pi / \pi_2 \simeq \pi(p-3, 1, \chi \omega)$.
        \item\label{item: llc case bad} If $\rho$ is an extension $\begin{pmatrix} \chi \omega^{-1} & * \\ 0 & \chi \end{pmatrix}$,
        then $\pi$ is an extension
        $$
            0 \to \pi(p-3, 1, \chi \omega) \to \pi \to \St \otimes \chi \to 0.
        $$
        This extension is split if and only if $\rho$ is semisimple.
        On both the Galois side and the $\GL_2(\Q_p)$ side, there is a unique class of non-trivial extensions, so this property determines $\pi$.
    \end{enumerate}
\end{theorem}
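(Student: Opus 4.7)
The plan is to deduce the statement from the mod $p$ local Langlands correspondence for $G = \GL_2(\Q_p)$, as established by the combined work of Colmez \cite{colmez_representations}, Breuil \cite{breuil_sur_quelques}, and Paskunas, together with \Cref{thm: reminder of mod p representations}. The main inputs are: the explicit computation of $\mathbf{V}$ (equivalently $\MF$) on supersingular representations and on principal series; the exactness of $\mathbf{V}$; and the fact that $\mathbf{V}$ induces an isomorphism on $\Ext^1$ groups between appropriate irreducible objects in a block.

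For uniqueness, I would invoke Paskunas' block decomposition of the category of smooth $G$-representations over $k$ with a fixed central character. Within each block, the exactness of $\MF$ forces the Jordan--H\"older factors of $\pi$ to be determined by the semisimplification of $\rho$, and the requirement that $\pi$ have no finite-dimensional $G$-invariant sub or quotient eliminates the remaining ambiguity arising from possible characters attached to the socle or the top.

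For existence, I would treat the four cases separately. Cases (i) (supersingular) and (ii) (generic principal series block) reduce to Breuil's original computation of $\mathbf{V}$ on these representations. In case (ii), the claim that the extension is split if and only if $\rho$ is follows from matching $\Ext^1_G(\pi([p-3-r], \lambda^{-1}, \chi \omega^{r+1}), \pi(r, \lambda, \chi))$ with $\Ext^1_{\Gcal_p}(\chi_2, \chi_1)$, both of which are one-dimensional in the generic range $\chi_1 \neq \chi_2 \omega^{\pm 1}$. In case (iii), the three-step Jordan--H\"older filtration $0 \subsetneq \pi_1 \subsetneq \pi_2 \subsetneq \pi$ is read off from Paskunas' description of the injective envelope of $\St \otimes \chi$ in the corresponding block, after imposing the no-finite-dimensional-quotient condition.

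The main obstacle is case \ref{item: llc case bad}. Here the block on the $G$-side contains $\St \otimes \chi$, $\chi \circ \det$ and $\pi(p-3, 1, \chi \omega)$, and one must produce a non-trivial extension $0 \to \pi(p-3, 1, \chi \omega) \to \pi \to \St \otimes \chi \to 0$, unique up to isomorphism, which $\MF$ maps to the non-split extension of $\chi$ by $\chi \omega^{-1}$. This amounts to showing that $\Ext^1_G(\St \otimes \chi, \pi(p-3, 1, \chi \omega))$ is one-dimensional and that $\MF$ induces an isomorphism of this group onto $\Ext^1_{\Gcal_p}(\chi, \chi \omega^{-1})$, which is the delicate content of Paskunas' analysis of Colmez's functor on non-generic blocks.
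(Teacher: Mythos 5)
Your proposal is correct and follows essentially the same route as the paper: both treat the theorem as an assembly of known results, deducing the irreducible and generic reducible cases from Colmez's computation of the functor $\MF$ (Section VII.4 of \cite{colmez_representations}, building on Breuil), and handling the delicate case \Cref{item: llc case bad} via Paskunas's analysis of the non-generic block (the paper cites the end of the proof of \cite[Lemma 10.35]{paskunas_image}), combined with the fact that $\Ext^1_{\Gcal_p}(\chi, \chi\omega^{-1})$ is one-dimensional so there is a unique non-split Galois extension class. One minor inaccuracy: in case (ii) the sub-case $\chi_1 = \chi_2$ is allowed, and there both $\Ext^1_{\Gcal_p}(\chi_2,\chi_1)$ and the corresponding $G$-side $\Ext^1$ are two-dimensional rather than one-dimensional, but this does not affect the argument since that case is still covered by Colmez's results.
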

\begin{proof}
    All the statements follow from the work of Colmez \cite[Section VII.4]{colmez_representations} except for case \Cref{item: llc case bad}, which follows from the end of the proof \cite[Lemma 10.35]{paskunas_image} by taking into account that there is only one isomorphism class of Galois representations that are non-split extensions of 1 by $\omega^{-1}$.
\end{proof}

For $\rho$ as in the theorem, we will say that $\pi$ is the representation corresponding to $\rho$ under the mod $p$ local Langlands correspondence.
One could argue (for example, following \cite[Remark 7.7]{patching_gl2}) that the ``true" mod $p$ local Langlands correspondence in case \Cref{item: llc case bad} should be (up to isomorphism) a non-trivial extension of $\pi$ as above by two copies of the trivial character.
However, we are only interested in the socle of $\pi$, which remains the same and can be easily described in general by the following result.

\begin{proposition}\label{proposition: socle llc generic reducible}
    Let $\rho$ be a representation $\Gal(\bar \Q_p / \Q_p) \to \GL_2(k)$ and let $\pi$ be the corresponding representation of $\GL_2(\Q_p)$. Then, the following statements hold.
    \begin{enumerate}
        \item The representation $\pi(r, 0, \chi)$ is a subrepresentation of $\pi$ if and only if $\rho \simeq \Ind(\omega_2^{r + 1}) \otimes \chi \omega^{-1}$.
        \item For $\lambda \neq 0$ and $(r, \lambda) \neq (0, \pm 1), (p-1, \pm 1)$, $\pi(r, \lambda, \chi)$ is a subrepresentation of $\pi$ if and only if
        $$
            \rho \simeq \begin{pmatrix}
                \omega^r \unr{\lambda} & * \\
                0 & \omega^{-1} \unr{\lambda^{-1}}
            \end{pmatrix} \otimes \chi.
        $$
        \item $\St \otimes \chi$ is a subrepresentation of $\pi$ if and only if
        $$
            \rho \simeq \begin{pmatrix}
                1 & * \\
                0 & \omega^{-1}
            \end{pmatrix} \otimes \chi.
        $$
        \item $\chi \circ \det$ is never a subrepresentation of $\pi$.
    \end{enumerate}
\end{proposition}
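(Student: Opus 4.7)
The plan is to run through the four cases of Theorem~\ref{thm: mod p langlands} describing $\pi$ in terms of $\rho$, identify the socle of $\pi$ in each case, and match the result against the four clauses of the proposition.

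Clause~\itemnumber{4} is immediate: condition~(3) of Theorem~\ref{thm: mod p langlands} asserts that $\pi$ has no finite-dimensional subrepresentations, and each $\chi\circ\det$ is one-dimensional.

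For~\itemnumber{1}--\itemnumber{3} I would argue case by case. If $\rho$ is irreducible, then $\pi \simeq \pi(r,0,\chi\omega)$ is itself irreducible supersingular, hence equals its own socle; rewriting with the substitution $\chi \mapsto \chi\omega^{-1}$ yields~\itemnumber{1}. If $\rho$ is generic reducible, the extension displayed in Theorem~\ref{thm: mod p langlands} has socle $\pi(r,\lambda,\chi)$, and the explicit diagonal characters $\chi_1 = \omega^r\chi\unr{\lambda}$, $\chi_2 = \omega^{-1}\chi\unr{\lambda^{-1}}$ translate directly into the shape of $\rho$ required by~\itemnumber{2}. If $\rho$ is the non-split extension of the third case, the unique Jordan--H\"older filtration listed there places $\St\otimes\chi$ at the socle, proving the non-split direction of~\itemnumber{3}. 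In the fourth case, the socle always contains $\pi(p-3,1,\chi\omega)$, which matches the boundary value $(r,\lambda)=(p-3,1)$ of~\itemnumber{2}; moreover, when $\rho$ is semisimple the displayed extension splits and the socle additionally contains $\St\otimes\chi$, giving the split direction of~\itemnumber{3}.

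The converses follow at once, since any irreducible subrepresentation of $\pi$ lies in its socle and the above analysis exhausts the socle in every branch. The only point that needs genuine care -- and what I expect to be the fiddliest part of the argument -- is verifying that the isomorphisms between the various $\pi(r,\lambda,\chi)$ listed in Theorem~\ref{thm: reminder of mod p representations} are compatible with the corresponding Galois-side identities, notably $\Ind(\omega_2^{r+1})\otimes\unr{-1}\simeq\Ind(\omega_2^{r+1})$ together with the analogous symmetries for the principal series parameters, so that the association $\pi\mapsto\rho$ in each clause of the proposition is well defined.
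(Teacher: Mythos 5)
Your proposal is correct and follows essentially the same route as the paper, whose proof is simply a citation of Theorem~\ref{thm: mod p langlands} together with the isomorphism relations of Theorem~\ref{thm: reminder of mod p representations}: read off the socle of $\pi$ in each case of the correspondence and match parameters, checking compatibility with the isomorphisms among the $\pi(r,\lambda,\chi)$ and the corresponding Galois-side identities. Your case-by-case write-up (including the observation that clause~(iv) is just property~(3) of Theorem~\ref{thm: mod p langlands}, and that the non-split cases force the socle to be a single constituent) merely spells out what the paper leaves implicit.
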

\begin{proof}
    This follows from \Cref{thm: mod p langlands} and \Cref{thm: reminder of mod p representations} \Cref{item: isomorphisms pis ordinary}.
\end{proof}

\subsection{The weight part of Serre's conjecture}
\label{subsection: serres conjecture}

In this section we will recall the answer to the following question: given a continuous representation $\rho \colon \Gal(\bar \Q / \Q) \to \GL_2(k)$, for what $0 \leq r \leq p-1$ and $s$ does $\rho$ contribute to $H^1(\Gamma_1(N), \Sym^r (k^2) \otimes \omega^{- s})$ and, when it does, what are its eigenvalues for the Hecke operators at $p$?
What we mean by Hecke operators at $p$ is the following.
One can define the action of Hecke operators $T$ and $S$ on the arithmetic homology complex $C_\bullet(U^p K, \Sym^r (k^2)^\vee \otimes_k \omega^{s})$ for any tame level $U^p \subseteq \GL_2(\A^{p \infty})$
as we explained in \Cref{subsection: Hecke operators}
by using the actions of $G^-$ and $KZ$ on $\Sym^r (k^2)^\vee \otimes_k \omega^{s})$, and similarly for the cohomology of the dual.
When $s = 0$, these operators correspond under the Eichler--Shimura isomorphism followed by reduction mod $p$ to the usual Hecke operators $T_p$ and $\langle p \rangle$ respectively for modular forms of level $\Gamma_1(N)$ and weight $r+2$.

The answer to the question is contained in the proof of \cite[Theorem 3.17]{buzzard_diamond_jarvis}.
We warn the reader that our conventions are different to those in \cite{buzzard_diamond_jarvis}: $\rho$ contributes to the cohomology of a Serre weight $V$ in the sense of this article if and only if $\rho^\vee$ is modular of weight $V$ in the sense of \cite{buzzard_diamond_jarvis} (equivalently, if and only if $\rho$ is modular of weight $V^\vee \otimes \omega^{-1}$ in the sense of \cite{buzzard_diamond_jarvis}).
In particular, it follows from \cite[Corollary 2.11]{buzzard_diamond_jarvis} that $\rho$ contributes to the cohomology of $V \otimes \omega^a$ if and only if $\rho \varepsilon^a$ contributes to the cohomology of $V$ for any $a$.

\begin{theorem}
\label{thm: serre weight conjecture}
    Let $\rho \colon \Gal(\bar \Q / \Q) \to \GL_2(k)$ be an odd irreducible Galois representation.
    Let $0 \leq r \leq p-1$ and $a \in \Z$.
    Let $\lambda \in k, b \in k^\times$ and set $\tau = \lambda b$, $\sigma = b^2$ and $s = a + r$.
    Then, $\rho$ contributes to the $(T = \tau, S = \sigma)$-eigenspace in $H^1(\Gamma_1(N), \Sym^r (k^2) \otimes \omega^{-s})$ if and only if $N(\rho)$ divides $N$ and one of the following holds:
    \begin{enumerate}
        \item $\lambda = 0$ and $\rho|_{\Gcal_p} \simeq \Ind(\omega_2^{r+1}) \otimes \omega^{a-1
} \unr{b}$,
        \item $\lambda \neq 0$, $(r, \lambda) \neq (0, \pm 1)$ and
        $$
            \rho|_{\Gcal_p} \simeq \begin{pmatrix}
                \omega^r \unr{\lambda} & * \\
                0 & \omega^{-1} \unr{\lambda^{-1}}
            \end{pmatrix} \otimes \omega^{a} \unr{b}.
        $$
        \item $r = 0$, $\lambda = \pm 1$ and
        $$
            \rho|_{\Gcal_p} \simeq \begin{pmatrix}
                1 & * \\
                0 & \omega^{-1}
            \end{pmatrix} \otimes \omega^{a} \unr{\lambda b}.
        $$
        where $*$ denotes a peu ramifi\'ee extension.
    \end{enumerate}
\end{theorem}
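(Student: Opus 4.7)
My plan is to deduce this statement directly from Theorem~3.17 of Buzzard–Diamond–Jarvis, after translating conventions. The paper already observes that $\rho$ contributes to the cohomology in the sense here iff $\rho^\vee$ is modular of weight $V$ in the sense of \cite{buzzard_diamond_jarvis} (equivalently, iff $\rho$ is modular of weight $V^\vee\otimes\omega^{-1}$), so no new modularity input is needed: the work is entirely in tracking how the three independent twist parameters $(r,a,b)$ and the Hecke eigenvalues move.

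First, I would use BDJ Corollary~2.11 to absorb the twist by $\omega^{-s} = \omega^{-(a+r)}$ on the coefficient system into a twist of the Galois representation. That is, I reduce to the untwisted weight $\Sym^r(k^2)$ at the cost of replacing $\rho$ by $\rho\varepsilon^{a+r}$; this is responsible for the characters $\omega^{r+a}$ appearing in the conclusion (the $\omega^r$ comes bundled with the intrinsic twist in the notation $\pi(r,\lambda,\chi)$, and the $\omega^a$ comes from the explicit $\omega^{-s}$ twist). Since $\omega(p)=1$, this reduction preserves the Hecke operators $T$ and $S$ at $p$, so I need only verify that $(T,S)$ are not shifted under the twist; this is immediate because $\det\alpha = p$ and $\det\beta = p^2$ and both $\omega^{-s}(\det\alpha)$ and $\omega^{-s}(\det\beta)$ equal $1$.

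Next, I would account for the parameter $b$ via the computation in \Cref{remark: twist changes eigenvalues}: the eigenvalues $(\tau,\sigma)=(\lambda b,b^2)$ correspond to twisting an eigenpacket with eigenvalues $(\lambda,1)$ by the unramified character $\unr{b}$, which on the Galois side amounts to twisting $\rho|_{\Gcal_p}$ by $\unr{b}$. With $a$ and $b$ thereby stripped off, the statement reduces to the case $a=0$, $b=1$, $s=r$, where I ask when $\rho$ appears in $H^1(\Gamma_1(N),\Sym^r(k^2)\otimes\omega^{-r})$ with $(T,S)$-eigenvalues $(\lambda,1)$. This is precisely the normalisation used in BDJ (modulo the dual/twist dictionary in \cite[Corollary~2.11]{buzzard_diamond_jarvis}), and Theorem~3.17 of \loccit\ produces the three cases: supersingular ($\lambda=0$, giving $\Ind(\omega_2^{r+1})\otimes\omega^{-1}$), principal series ($\lambda\neq 0$ and $(r,\lambda)\neq(0,\pm1)$, giving the displayed upper-triangular shape), and the boundary case $r=0,\lambda=\pm1$ where the extension is forced to be \emph{peu ramifi\'ee} by companion-weight considerations.

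The main obstacle is purely bookkeeping: reconciling the opposite conventions for duality and for the cyclotomic twist between BDJ and the present paper, and checking that the three independent twists—by $\omega^r$ (implicit in $\pi(r,\lambda,\chi)$), by $\omega^a$ (the explicit coefficient twist), and by $\unr{b}$ (encoded in the eigenvalues)—combine on the Galois side into the single character $\omega^a\unr{b}$ (resp.\ $\omega^{a-1}\unr{b}$ in the supersingular case, where the $\omega^{-1}$ shift in BDJ's normalisation of the Serre weights absorbs one power of $\omega$). Once these dictionaries are set up carefully there is no further content beyond citing \cite[Theorem~3.17]{buzzard_diamond_jarvis}.
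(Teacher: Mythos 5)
There is a genuine gap: you treat the statement as if it were only about which Serre weights $\rho$ is modular of, and claim that after the twisting bookkeeping ``there is no further content beyond citing \cite[Theorem~3.17]{buzzard_diamond_jarvis}.'' But the theorem is about contribution to a \emph{specific} $(T = \tau, S = \sigma)$-eigenspace for the Hecke operators at $p$, and BDJ's notion of modularity of a weight $V$ records no information about the mod $p$ eigenvalues of $T$ and $S$ at $p$ whatsoever. Your reduction via \cite[Corollary 2.11]{buzzard_diamond_jarvis} and the unramified-twist dictionary of \Cref{remark: twist changes eigenvalues} correctly normalises the three twists, but it cannot by itself prove either direction of the eigenvalue assertion. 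For the ``only if'' direction one needs the mod $p$ local-global compatibility at $p$ relating the $T$-eigenvalue to the unramified characters $\unr{\lambda},\unr{\lambda^{-1}}$ (resp.\ to irreducibility of $\rho|_{\Gcal_p}$ with the prescribed restriction to inertia and determinant of $\Frob_p$); the paper gets this from Edixhoven's Theorems 2.5 and 2.6 (Deligne, Fontaine--Serre), which your proposal never invokes.

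For the ``if'' direction, \cite[Theorem 3.17]{buzzard_diamond_jarvis} only tells you that $\rho$ contributes to $H^1(\Gamma_1(N), \Sym^r(k^2)\otimes\omega^{-s})$ for \emph{some} system of eigenvalues at $p$; one must still show it lands in the eigenspace with $T = \lambda b$, $S = b^2$. In most cases this follows because Edixhoven's results show this is the only eigenspace available, but in the boundary case $r = p-2$ with $\rho|_{\Gcal_p}$ split, i.e.\ $\omega^{a-1}\unr{\lambda b}\oplus\omega^{a-1}\unr{\lambda^{-1}b}$ with $\lambda \neq \pm 1$, both $(T=\lambda b, S=b^2)$ and $(T=\lambda^{-1}b, S=b^2)$ are a priori possible and one must prove that \emph{both} occur; the paper does this by appealing to Gross \cite[Theorem 13.10]{gross_tameness}. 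Your proposal addresses neither this case nor the underlying need for any input beyond BDJ, so as written the argument does not establish the eigenvalue part of the statement. (Your handling of the twists themselves --- absorbing $\omega^{-s}$ via \cite[Corollary 2.11]{buzzard_diamond_jarvis}, noting $\omega(p)=1$ so that $T$ and $S$ are unchanged, and stripping off $b$ via the unramified twist --- is fine and matches what the paper does implicitly.)
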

\begin{proof}
    According to \cite[Theorem 2.5 and Theorem 2.6]{edixhoven_weight}, only $\rho$ whose restriction to $\Gcal_p$ is irreducible (resp. reducible) can contribute to the eigenspaces where $T = 0$ (resp. $T \neq 0$).
    By \cite[Theorem 2.6]{edixhoven_weight}, the representations $\rho$ appearing in the $(T = 0, S = b^2)$-eigenspace of $H^1(\Gamma_1(N), \Sym^{r} (k^2) \otimes \omega^{-s})$ are those satisfying $\rho|_{\mathcal{I}_p} \omega^{-s} \simeq (\omega_2^{r+1} \oplus \omega_2^{p(r+1)}) \otimes \omega^{-r-1}$ and $(\det \rho)(\Frob_p) = b^2$ (recall that $\Frob_p$ is a Frobenius mapping to $p$ under class field theory, so it is a well-defined element of $\Gcal_p^{\mathrm{ab}}$).
    In particular, the restriction to $\Gcal_p$ of such a representation has determinant $\omega^{2 s-r-1} \unr{b^2}$, so it must be isomorphic to
    $$
        \Ind(\omega_2^{r+1}) \otimes \omega^{s-r-1} \unr{b}
        \simeq
        \Ind(\omega_2^{r+1}) \otimes \omega^{a-1} \unr{b}.
    $$

    For the case where $\lambda \neq 0$, \cite[Theorem 2.5]{edixhoven_weight} states that, given a system of Hecke eigenvalues in the $(T = \tau, S = \sigma)$-eigenspace of $H^1(\Gamma_1(N), \Sym^{r} (k^2) \otimes \omega^{-s})$, then
    $$
        \rho|_{\Gcal_p} \omega^{-s} \simeq \begin{pmatrix}
            \unr{\tau} & * \\
            0 & \omega^{-r-1} \unr{\tau^{-1} \sigma}
        \end{pmatrix}.
    $$
    If $\tau = \lambda b$ and $\sigma = b^2$, this is equivalent to
    $$
        \rho|_{\Gcal_p} \simeq \begin{pmatrix}
            \omega^r \unr{\lambda} & * \\
            0 & \omega^{-1} \unr{\lambda^{-1}}
        \end{pmatrix} \otimes \omega^{a} \unr{b}.
    $$
    Conversely, \cite[Theorem 3.17]{buzzard_diamond_jarvis} and its proof show that a representation of this form does contribute to $H^1(\Gamma_1(N), \Sym^{r} (k^2) \otimes \omega^{-s})$ provided that the extension is peu ramifi\'ee whenever $r = 0$ and $\lambda = \pm 1$ (and, in this case, it never contributes if the extension is tr\`es ramifi\'ee).
    It remains to show that it contributes to the $(T = \lambda b, S = b^2)$-eigenspace.
    When $r \neq p-2$ or $\rho|_{\Gcal_p}$ is non-split, \cite[Theorems 2.5 and 2.6]{edixhoven_weight} again show that this the only eigenspace for Hecke operators at $p$ to which $\rho$ can contribute, so it must do so.
    In the case when $r = p-2$ and $\rho|_{\Gcal_p} \simeq \omega^{a-1} \unr{\lambda b} \oplus \omega^{a-1} \unr{\lambda^{-1} b}$, the same results show that $\rho$ can only appear in the eigenspaces for $(T = \lambda b, S = b^2)$ and $(T = \lambda^{-1} b, S = b^2)$.
    We know that $\rho$ contributes to at least one of these, and we must show that $\rho$ contributes to both.
    When $\lambda = \pm 1$ this is clear, since both systems of eigenvalues are actually the same.
    When $\lambda \neq \pm 1$, this follows from \cite[Theorem 13.10]{gross_tameness}.
\end{proof}

\section{\texorpdfstring{$p$}{p}-arithmetic homology of \texorpdfstring{$\pi(r, \lambda, \chi)$}{pi(r, l, x)}}

\subsection{\texorpdfstring{$p$}{p}-arithmetic and arithmetic homology}
\label{subsection: parithmetic and arithmetic}

In this section we will relate the $p$-arithmetic homology of the representations $\pi(r, \lambda, \chi)$ to the arithmetic homology of Serre weights $\Sym^r (k^2)^\vee \otimes \omega^{s}$.
The argument is the same as that of \cite{S_arithmetic_cohomology}.
As in \Cref{remark: twist changes eigenvalues}, $R$ is a $k$-algebra.

\begin{lemma}
For any $0 \leq r \leq p-1$ and $s \in \Z$, $\tau \in R$ and $\sigma \in R^\times$, we have
$$
    \Tor^i_{R[T, \diamondp]}(R[T, \diamondp]/(T - \tau, \diamondp-\sigma ), \cInd_K^G (\Sym^r (k^2)^\vee \otimes_k \omega^{s} \otimes_k R)) = 0
$$
for $i > 0$.
\end{lemma}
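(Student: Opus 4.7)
The Tor groups are computed by the Koszul complex of $(T-\tau, \diamondp-\sigma)$: these two elements form a regular sequence in $R[T, \diamondp]$ with quotient $R$, so the Koszul complex provides a free resolution of $R \simeq R[T, \diamondp]/(T-\tau, \diamondp-\sigma)$. The desired vanishing thus reduces to showing that $(T-\tau, \diamondp-\sigma)$ is a regular sequence on $M := \cInd_K^G(\Sym^r (k^2)^\vee \otimes_k \omega^{s} \otimes_k R)$. My plan is to prove the stronger statement that $M$ is free as an $R[T, \diamondp^{\pm 1}]$-module. Since $\sigma \in R^\times$, the pair $(T - \tau, \diamondp - \sigma)$ is regular in the flat localisation $R[T, \diamondp^{\pm 1}]$, and freeness of $M$ over this ring will then yield the desired vanishing via change of rings.

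Writing $M \simeq \cInd_K^G(V) \otimes_k R$ for $V := \Sym^r(k^2)^\vee \otimes_k \omega^{s}$, with $T$ and $\diamondp$ acting only on the first factor, it suffices to treat $R = k$. I would then use the transitivity of compact induction $\cInd_K^G \simeq \cInd_{KZ}^G \circ \cInd_K^{KZ}$. Because $\omega$ is trivial on $p$, the character $\omega^s \circ \det$ is trivial on $\beta$ and $V$ extends canonically to a representation of $KZ$ in which $\beta$ acts trivially. Using $KZ/K \simeq \Z$ via $\beta$, this extension yields a $KZ$-equivariant isomorphism
\[
    \cInd_K^{KZ} V \simeq V \otimes_k k[\diamondp^{\pm 1}],
\]
on which $\beta$ acts by multiplication by $\diamondp$ on the second factor. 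Applying $\cInd_{KZ}^G$ then gives $\cInd_K^G V \simeq \cInd_{KZ}^G V \otimes_k k[\diamondp^{\pm 1}]$ as $G$-representations equipped with a commuting $\diamondp$-action.

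The remaining ingredient is the freeness of $\cInd_{KZ}^G V$ over the spherical Hecke algebra $\End_G(\cInd_{KZ}^G V) = k[T]$, due to Barthel--Livn\'e \cite{barthel_livne}. Combined with the previous display, this gives freeness of $\cInd_K^G V$ over $k[T, \diamondp^{\pm 1}]$, hence of $M$ over $R[T, \diamondp^{\pm 1}]$, and the Tor vanishing follows. The main technical input is this last freeness statement; in its absence one could argue directly by two-step verification, observing that $\diamondp - \sigma$ is injective on $M$ because $\beta$ acts freely with infinite orbits on $G/K$ while elements of $M$ are compactly supported, and then that $T - \tau$ is injective on $M/(\diamondp - \sigma) M$ by tracking leading components under the $\alpha$-filtration on $G/KZ$ coming from the Cartan decomposition.
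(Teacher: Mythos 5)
Your opening reduction is fine and matches the paper's: since $(T-\tau, S-\sigma)$ is a regular sequence in $R[T,S]$, the Koszul complex resolves $R[T,S]/(T-\tau,S-\sigma)$, and the lemma reduces to showing that $(S-\sigma, T-\tau)$ is a regular sequence on $M$. The paper proves exactly this, by analysing how $T$ and $S$ move the support of a function via the Cartan decomposition (citing \cite[Lemma 4.10]{patching_gl2}); your one-sentence ``fallback'' at the end is precisely that argument, in the same order ($S-\sigma$ injective on $M$, then $T-\tau$ injective on $M/(S-\sigma)M$ by the Barthel--Livn\'e leading-term argument), and it works directly over any $R$, so no reduction to $R=k$ is needed for it.

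Your primary route, however, has a genuine gap. The claimed isomorphism $\cInd_K^G V \simeq \cInd_{KZ}^G V \otimes_k k[S^{\pm 1}]$, as $G$-representations with a commuting $S$-action, is false. Transitivity gives $\cInd_K^G V \simeq \cInd_{KZ}^G\bigl(V \otimes_k k[S^{\pm 1}]\bigr)$ with $\beta$ acting on the coefficients by multiplication by $S$, but this nontrivial character of $KZ$ (trivial on $K$, $\beta \mapsto S$) cannot be pulled out of the compact induction: characters of $G$ trivial on $K$ factor through $v_p \circ \det$, hence send $\beta$ to a \emph{square}, so the character does not extend to $G$ and the projection formula does not apply. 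Concretely, on $\cInd_K^G V$ the central element $\beta \in G$ acts by the Hecke operator $S$ itself, whereas on $\cInd_{KZ}^G V \otimes_k k[S^{\pm 1}]$ (with $\beta$ acting trivially on the chosen extension of $V$ and $G$ acting only on the first factor) $\beta$ acts as the identity while $S$ does not; so no such $G$- and $S$-equivariant isomorphism exists. Consequently the deduction of freeness of $M$ over $R[T,S^{\pm 1}]$ from Barthel--Livn\'e's freeness of $\cInd_{KZ}^G V$ over $k[T]$ is not established as written (one could repair it, e.g.\ by adjoining a square root of $S$ so that the character does extend and then descending flatness, but the cleaner and intended argument is the direct regular-sequence one, which you should promote from fallback to the actual proof).
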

\begin{proof}
This follows from the fact that $(\diamondp - \sigma, T - \tau)$ is a regular sequence for the $k[T,S]$-module $\cInd_K^G (\Sym^r (k^2)^\vee \otimes_k \omega^{s} \otimes_k R)$, which can be seen by studying how $T$ and $S$ modify the support of a function using the Cartan decomposition.
See \cite[Lemma 4.10]{patching_gl2} for the details.
\end{proof}

This shows that the $G$-module $\tilde \pi(r, \tau, \sigma, s)_R$ from \Cref{remark: twist changes eigenvalues} can be written not just as the eigenquotient
\begin{align*}
    \cInd_K^G & (\Sym^r (k^2)^\vee \otimes_k \omega^{s} \otimes_k R) / (T - \tau, \diamondp - \sigma) \\
    & \simeq
    \frac{R[T, S]}{(T - \tau, \diamondp - \sigma)} \otimes_{R[T, S]} \cInd_K^G(\Sym^r (k^2)^\vee \otimes_k \omega^{s} \otimes_k R)
\end{align*}
but also as a \emph{derived} eigenquotient: there is an isomorphism in the derived category of (abstract\footnote{In our arguments involving homological algebra, we will always work with categories of abstract representations (of $G$ or other groups) and never with categories of smooth representations}) $R[G][T,S]$-modules
$$
    \tilde \pi(r, \tau, \sigma, s)_R \simeq \frac{R[G][T, S]}{(T - \tau, \diamondp - \sigma)} \otimesL_{R[G][T, S]} \cInd_K^G(\Sym^r (k^2)^\vee \otimes_k \omega^{s} \otimes_k R).
$$
Fix $U^p$ and $N$ as in \Cref{subsection: cohomology preliminaries}.
We can define an action of Hecke operators $T$ and $S$ in arithmetic homology over $R$ in the same way as described in the beginning of \Cref{subsection: serres conjecture},
and the arguments in \cite[Section 5.8]{S_arithmetic_cohomology} show that we have an isomorphism in the derived category of $\Tbb(pN) \otimes_\Z R[T, S]$-modules for the $p$-arithmetic homology complex
\begin{align*}
    C_\bullet(U^p, \tilde \pi(r, \tau, \sigma, s)_R)
    & \simeq
    C_\bullet(U^p K, \Sym^r (k^2)^\vee \otimes_k \omega^{s} \otimes_k R) \otimesL_{R[T, S]} \frac{R[T, S]}{(T - \tau, \diamondp - \sigma)}.
\end{align*}
Moreover,
\begin{align*}
    C_\bullet(U^p K, \Sym^r (k^2)^\vee \otimes_k \omega^{s} \otimes_k R) \simeq C_\bullet(U^p K, \Sym^r (k^2)^\vee \otimes_k \omega^{s}) \otimesL_k R.
\end{align*}
Thus, in fact
\begin{align*}
    C_\bullet(U^p, \tilde \pi(r, \tau, \sigma, s)_R)
    &
    \simeq
    C_\bullet(U^p K, \Sym^r (k^2)^\vee \otimes_k \omega^{s}) \otimesL_{k[T,S]} \frac{R[T, S]}{(T - \tau, S - \sigma)}.
\end{align*}

\begin{remark}\label{remark: reason for families}
    The reason why we have considered representations in families is the following.
    Assume that $R = k[\tau, \sigma, \sigma^{-1}]$ for two indeterminate variables $\tau$ and $\sigma$.
    Then,
    \begin{align*}
        C_\bullet(U^p, \tilde \pi(r, \tau, \sigma, s)_R)
        &
        \simeq
        C_\bullet(U^p K, \Sym^r (k^2)^\vee \otimes_k \omega^{s}) \otimesL_{k[T,S]} \frac{R[T, S]}{(T - \tau, S - \sigma)}
        \\ &
        \simeq
        C_\bullet(U^p K, \Sym^r (k^2)^\vee \otimes_k \omega^{s}) \otimesL_{k[T,S,S^{-1}]} \frac{R[T, S,S^{-1}]}{(T - \tau, S - \sigma)}
        \\ &
        \simeq
        C_\bullet(U^p K, \Sym^r (k^2)^\vee \otimes_k \omega^{s}) \otimesL_{k[T,S, S^{-1}]} \frac{k[T, S, S^{-1}, \tau, \sigma, \sigma^{-1}]}{(T - \tau, S - \sigma)}
        \\ &
        \simeq
        C_\bullet(U^p K, \Sym^r (k^2)^\vee \otimes_k \omega^{s}),
    \end{align*}
    where the last term is viewed as an $R$-module by letting $\tau$ act as $T$ and $\sigma$ as $S$.
    In other words, the $p$-arithmetic homology over $R$ coincides with the corresponding arithmetic homology, and not just a (derived) eigenquotient of it.
\end{remark}

\begin{proposition}
\label{proposition: spectral sequence}
    There is a spectral sequence converging to $H_*(U^p, \tilde \pi(r, \tau, \sigma, s)_R)$
    whose $E^2$ page is
    $$
        E^2_{i,j} = \Tor_i^{k[T,S]} \left( \frac{R[T, S]}{(T - \tau, \diamondp - \sigma)}, H_j(U^p K, \Sym^r (k^2)^\vee \otimes_k \omega^{s}) \right).
    $$
    In particular, there is a spectral sequence converging to $H_*(U^p, \pi(r, \lambda, \omega^a \unr{b}))$
    whose $E^2$ page is
    $$
        E^2_{i,j} = \Tor_i^{k[T,S]} \left( \frac{k[T, S]}{(T - \lambda b, \diamondp - b^2)}, H_j(U^p K, \Sym^r (k^2)^\vee \otimes_k \omega^{a+r}) \right).
    $$
\end{proposition}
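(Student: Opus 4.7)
The plan is to obtain the spectral sequence as the standard hyper-Tor (or Grothendieck) spectral sequence attached to the derived tensor product identification
\[
    C_\bullet(U^p, \tilde \pi(r, \tau, \sigma, s)_R) \simeq C_\bullet(U^p K, \Sym^r (k^2)^\vee \otimes_k \omega^{s}) \otimesL_{k[T,S]} \frac{R[T, S]}{(T - \tau, S - \sigma)}
\]
that has already been established in the discussion preceding the proposition. Given this identification, the proof reduces to a formal piece of homological algebra, so I would essentially only need to verify that the hyper-Tor spectral sequence respects the tame Hecke action and the Hecke operators at $p$.

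First I would choose a projective resolution $P_\bullet \to R[T,S]/(T-\tau, S-\sigma)$ of $R[T,S]$-modules (for instance the length-$2$ Koszul complex on the regular sequence $(T-\tau, S-\sigma)$ in $R[T,S]$, which is already $k[T,S]$-flat since $R$ is a $k$-algebra). Tensoring $P_\bullet$ over $k[T,S]$ with the complex $C_\bullet(U^p K, \Sym^r (k^2)^\vee \otimes_k \omega^{s})$ and filtering the resulting double complex in the standard way yields a spectral sequence with $E^2$-page
\[
    E^2_{i,j} = \Tor_i^{k[T,S]}\!\left(\frac{R[T, S]}{(T - \tau, S - \sigma)},\, H_j(U^p K, \Sym^r (k^2)^\vee \otimes_k \omega^{s}) \right)
\]
converging to the hyperhomology of the derived tensor product, i.e.\ to $H_*(U^p, \tilde \pi(r, \tau, \sigma, s)_R)$. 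The $\Tbb(pN)$-action on the spectral sequence comes from the fact that the entire construction takes place in the derived category of $\Tbb(pN) \otimes_\Z R[T,S]$-modules, as noted in Subsection on $p$-arithmetic vs arithmetic homology.

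For the specialisation, I would simply take $R = k$ and apply Remark on twists: the representation $\pi(r, \lambda, \omega^a \unr{b})$ is isomorphic to $\tilde \pi(r, \tau, \sigma, s)_k$ with $\tau = \lambda b$, $\sigma = b^2$ and $s = a + r$, so plugging in gives the second spectral sequence as stated.

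The main (very mild) obstacle is Hecke equivariance: one has to check that the operators $T$ and $S$ act on the resolution of the eigenquotient and on the arithmetic homology complex in compatible ways, so that the spectral sequence constructed above is one of $\Tbb(pN)$-modules and the identification with $H_*(U^p, \tilde \pi(r, \tau, \sigma, s)_R)$ is $\Tbb(pN)$-equivariant. This is essentially what is worked out in \cite[Section 5.8]{S_arithmetic_cohomology}, so I would just cite that reference rather than redo it.
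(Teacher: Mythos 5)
Your proposal is correct and matches the paper's (implicit) argument: the proposition is exactly the hyper-Tor spectral sequence attached to the derived tensor product identification $C_\bullet(U^p, \tilde \pi(r, \tau, \sigma, s)_R) \simeq C_\bullet(U^p K, \Sym^r (k^2)^\vee \otimes_k \omega^{s}) \otimesL_{k[T,S]} R[T,S]/(T-\tau, S-\sigma)$ established just before it, with Hecke-equivariance coming from working in the derived category of $\Tbb(pN) \otimes_\Z R[T,S]$-modules as in \cite[Section 5.8]{S_arithmetic_cohomology}. The specialisation via \Cref{remark: twist changes eigenvalues} with $\tau = \lambda b$, $\sigma = b^2$, $s = a + r$ is also exactly how the paper obtains the second spectral sequence.
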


\begin{remark}\label{remark: dimension of Tors}
    When $R = k$, the $k[T,S]$-module resolution
    $$
        k[T,S] \xrightarrow{(S - \sigma) \oplus (\tau - T)} k[T,S]^2 \xrightarrow{(T - \tau, S - \sigma)} k[T,S]
    $$
    of $k[T, S] / (T - \tau, \diamondp - \sigma)$ shows that for any $k[T,S]$-module $V$ that is finite-dimensional as a $k$-vector space, $\Tor^{k[T,S]}_i(k[T, S] / (T - \tau, \diamondp - \sigma), V)$ vanishes in degrees outside the range $[0, 2]$, and is isomorphic in degree 2 (resp. degree 0) to the $(T = \tau, S = \sigma)$-eigenspace (resp. eigenquotient) of $V$.
    Moreover, the Tor modules in degree 1 lie in a short exact sequence
    \begin{align*}
        0 & \to
        \frac{k[T]}{(T - \tau)} \otimes_{k[T]} \Hom_{k[S]} \left( \frac{k[S]}{(S - \sigma)}, V \right)
        \\ & \to
        \Tor^{k[T,S]}_1 \left( \frac{k[T, S]}{(T - \tau, \diamondp - \sigma)}, V \right)
        \\ & \to
        \Hom_{k[T]} \left( \frac{k[T]}{(T - \tau)}, \frac{k[S]}{(S - \sigma)} \otimes_{k[S]} V \right)
        \to 0.
    \end{align*}
    In particular, the Tor groups vanish if and only if they vanish in at least one of the degrees 0, 1 or 2.
    The dimensions $d_0$ and $d_2$ of the Tor spaces in degrees 0 and 2 are equal, and the dimension in degree 1 is $2 d_0 = 2 d_2$.
\end{remark}

\subsection{Proof of \texorpdfstring{\Cref{thm: 1}}{Theorem 1.1} in the generic case}
\label{subsection: proof generic}

Parts \Cref{item: thm 1 i} and \Cref{item: thm 1 ii} of \Cref{thm: 1} follow immediately from \Cref{proposition: spectral sequence} for supersingular and principal series representations, as well as their analogue for the reducible representations $\pi(0, \pm 1, \chi)$ and $\pi(p-1, \pm1, \chi)$ (by the corresponding results for arithmetic homology).

Moreover, if $\rho$ is an odd irreducible 2-dimensional Galois representations, then the localisation at $\rho$ of the spectral sequence from \Cref{proposition: spectral sequence} satisfies $(E^2_{i,j})_\rho = 0$ for $j \neq 1$, so we may conclude that
$$
    H_{i+1}(\Gamma^p_1(N), \pi(r, \lambda, \omega^a \unr{b}))_\rho \simeq \Tor_i^{k[T,S]} \left( \frac{k[T, S]}{(T - \lambda b, \diamondp - b^2)}, H_1(U^p K, \Sym^r (k^2)^\vee \otimes_k \omega^{a+r})_\rho \right).
$$
In particular, taking into account \Cref{remark: dimension of Tors}, the following are equivalent:
\begin{enumerate}
    \item $\rho$ contributes to the $p$-arithmetic homology $H_*(\Gamma^p_1(N), \pi(r, \lambda, \omega^a \unr{b}))$, and it does exactly in degrees 1, 2 and 3,
    \item $\rho$ contributes to the degree 1 $p$-arithmetic homology $H_1(\Gamma^p_1(N), \pi(r, \lambda, \omega^a \unr{b}))$,
    \item $\rho$ contributes to the $p$-arithmetic homology $H_*(\Gamma^p_1(N), \pi(r, \lambda, \omega^a \unr{b}))$,
    \item $\rho$ contributes to the $(T = \lambda b, S = b^2)$-eigenspace of $H_1(\Gamma_1(N), \Sym^r (k^2)^\vee \otimes \omega^{a+r})$,
    \item $\rho$ contributes to the $(T = \lambda b, S = b^2)$-eigenspace of $H^1(\Gamma_1(N), \Sym^r (k^2) \otimes \omega^{-a-r})$.
\end{enumerate}
By \Cref{thm: serre weight conjecture} and \Cref{proposition: socle llc generic reducible}, when $\pi(r, \lambda, \omega^a \unr{b})$ is irreducible, these are equivalent to $N(\rho)$ dividing $N$ and this representation appearing in the socle of the smooth representation of $\GL_2(\Q_p)$ associated to $\rho|_{\Gcal_p}$ by the mod $p$ local Langlands correspondence of \Cref{thm: mod p langlands}, which proves part \Cref{item: thm 1 iii} of \Cref{thm: 1} in this case.
For later reference, we also record the following proposition, which follows in the same way from \Cref{proposition: spectral sequence} and \Cref{thm: serre weight conjecture}.

\begin{proposition}\label{proposition: representations in weights 0 and p-1}
    Let $\rho \colon \Gal(\bar \Q / \Q) \to \GL_2(k)$ be an odd irreducible representation. Then, the space $H_1(\Gamma^p_1(N), \pi(p-1, 1, \chi))$ (resp. to $H_1(\Gamma^p_1(N), \pi(0, 1, \chi))$) is finite-dimensional, and any system of Hecke eigenvalues in it has an attached Galois representation. Moreover, $\rho$ contributes to this space if and only if $N(\rho)$ divides $N$, and $\rho|_{\Gcal_p}$ is isomorphic to an extension (resp. a peu ramifi\'ee extension)
    $$
        \begin{pmatrix}
            1 & * \\
            0 & \omega^{-1}
        \end{pmatrix}
        \otimes \chi.
    $$
\end{proposition}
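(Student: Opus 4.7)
The plan is to carry out, for the two specific representations $\pi(p-1, 1, \chi)$ and $\pi(0, 1, \chi)$, the exact same argument as in Subsection \ref{subsection: proof generic}. These are not irreducible, but the spectral sequence of Proposition \ref{proposition: spectral sequence} applies verbatim, since it only requires the representation to be of the form $\tilde\pi(r, \tau, \sigma, s)_R$. Write $\chi = \omega^a \unr{b}$, so that $\pi(p-1, 1, \chi) \simeq \tilde \pi(p-1, b, b^2, a + p - 1)_k$ and $\pi(0, 1, \chi) \simeq \tilde \pi(0, b, b^2, a)_k$.

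First I would localize the spectral sequence of Proposition \ref{proposition: spectral sequence} at $\m_\rho$. Since $\rho$ is odd and irreducible, it can only contribute to arithmetic (co)homology of any Serre weight in degree 1, so $(E^2_{i,j})_\rho = 0$ for $j \neq 1$ and the spectral sequence degenerates to an isomorphism
\[
    H_{i+1}(\Gamma^p_1(N), \pi(r, 1, \omega^a \unr{b}))_\rho \simeq \Tor_i^{k[T,S]} \left( \frac{k[T, S]}{(T - b, S - b^2)}, H_1(\Gamma_1(N), \Sym^r (k^2)^\vee \otimes \omega^{a + r})_\rho \right)
\]
for $r \in \{0, p-1\}$. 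By Remark \ref{remark: dimension of Tors}, these Tor groups are non-zero precisely when $\rho$ contributes to the $(T = b, S = b^2)$-eigenspace of $H^1(\Gamma_1(N), \Sym^r (k^2) \otimes \omega^{-a-r})$ (dualizing as in Subsection \ref{subsection: Hecke operators}), which in particular shows the finite-dimensionality and existence of attached Galois representations claimed in the proposition.

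Next I would apply Theorem \ref{thm: serre weight conjecture} in the two cases. For $\pi(p-1, 1, \chi)$, set $r = p-1$, $\lambda = 1$. Since $(r, \lambda) = (p-1, 1) \neq (0, \pm 1)$, we fall into case (ii) of the theorem, which says $\rho$ appears in this eigenspace iff $N(\rho) \mid N$ and
\[
    \rho|_{\Gcal_p} \simeq \begin{pmatrix} \omega^{p-1} \unr{1} & * \\ 0 & \omega^{-1} \unr{1} \end{pmatrix} \otimes \omega^a \unr{b} \simeq \begin{pmatrix} 1 & * \\ 0 & \omega^{-1} \end{pmatrix} \otimes \chi,
\]
with no peu ramifiée restriction on the extension. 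For $\pi(0, 1, \chi)$, set $r = 0$, $\lambda = 1$: we now fall into case (iii) of the theorem, which gives the same shape of restriction but with the extension required to be peu ramifiée. This is precisely the statement of the proposition, completing the proof. There is no real obstacle here; the argument is purely a matching of data in the two relevant theorems, and the only subtlety worth pausing on is the bookkeeping of the twist $\omega^r$ absorbed into $\chi$ in the definition of $\pi(r, \lambda, \chi)$, which accounts for why the two cases $r = 0$ and $r = p - 1$ produce the same shape of $\rho|_{\Gcal_p}$ up to the peu ramifiée condition.
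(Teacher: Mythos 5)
Your proposal is correct and follows exactly the route the paper takes: the paper records this proposition as following "in the same way" from the spectral sequence of Proposition \ref{proposition: spectral sequence} (which, as you note, applies to the reducible $\tilde\pi(r,\tau,\sigma,s)_k$ just as well) together with cases (ii) and (iii) of Theorem \ref{thm: serre weight conjecture}, after localising at $\m_\rho$ and using that an odd irreducible $\rho$ only sees arithmetic homology in degree 1. One tiny presentational point: the finite-dimensionality and the existence of attached Galois representations for \emph{all} eigensystems come from the unlocalised spectral sequence (finite-dimensional $E^2$ page built from arithmetic homology), not from the localised Tor computation, but since you set up that spectral sequence first this is only a matter of phrasing.
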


\section{Preparation for the non-generic cases}
\label{section: Hasse invariant}

In order to deal with the Steinberg and trivial cases, we will need a few preliminaries on the non-zero maps $\pi(p-1,1,\chi) \to \pi(0,1,\chi)$ and $\pi(0,1,\chi) \to \pi(p-1,1,\chi)$ and the corresponding maps on $p$-arithmetic homology.
They turn out to be related to the degeneracy maps from modular forms of level $\Gamma_1(N)$ to level $\Gamma_1(N) \cap \Gamma_0(p)$ induced by $\tau \mapsto \tau$ and $\tau \mapsto p \tau$ and to
the group cohomological avatar of multiplication by the Hasse invariant studied by Edixhoven--Khare in \cite{edixhoven_khare_hasse}.
Our next goal is to study these maps.

\subsection{The map \texorpdfstring{$\pi(p-1,1,1) \to \pi(0,1,1)$}{pi(p-1,1,1) -> pi(0,1,1)}}
\label{subsection: map pi 1}

Recall from \Cref{subsection: mod p preliminaries} that there is a unique-up-to-scalaras non-zero map $\pi(p-1,1,\chi) \to \pi(0,1,\chi)$.
The goal of this section is to give an explicit description of this map.

We may assume that $\chi = 1$.
First, let us observe that there is a $K$-module isomorphism $k \oplus \Sym^{p-1} (k^2)^\vee \simto \Map(\P^1(\F_p), k)$, which identifies the trivial representation with the subrepresentation of constant functions $\P^1(\F_p) \to k$ and $\Sym^{p-1} (k^2)^\vee$ with the subrepresentation of functions whose total sum equals 0.
As usual, one can also identify $\Sym^{p-1} (k^2)^\vee$ with the space of homogeneous polynomial functions of degree $p-1$ in two variables.
The former identification is then given by sending a homogeneous polynomial function $Q$ of degree $p-1$ in two variables to the function $(x : y) \mapsto Q(x, y)$.

In fact, this can be upgraded to a $G^-$-equivariant isomorphism in a natural way.
The action of the monoid $G^+$ on $\F_p^2$ descends to an action on $\F_p^2 / \F_p^\times = \P^1(\F_p) \cup \{ 0 \}$.
It is easy to check (for example, using the Cartan decomposition of $G$) that an element $g \in G^+$ can act in three ways: invertibly (if $g \in K$), by sending everything to 0 (if all the entries of $g$ are multiples of $p$), or by mapping 0 and one point of $\P^1(\F_p)$ to 0 and all other points of $\P^1(\F_p)$ to another (fixed) point of $\P^1(\F_p)$.
Thus, we get an action of $G^-$ on $\Map(\P^1(\F_p) \cup \{ 0 \}, k)$, and it follows from the previous sentence that the subspace of functions such that $f(0) = \sum_{P \in \P^1(\F_p)} f(P)$ is stable under this action.
This space can be naturally identified with $\Map(\P^1(\F_p), k)$ by restriction, and the resulting action of $G^-$ on this space makes the isomorphisms in the previous paragraph $G^-$-equivariant.
Naturally, these isomorphisms are also $KZ$-equivariant when we instead extend the action of $K$ to one of $KZ$ by letting $\beta$ act trivially.
There is a $K$-equivariant isomorphism $\Map(\P^1(\F_p), k) \to \cInd_I^K (k)$ given by sending a function $f$ to $\begin{pmatrix} a & b \\ c & d \end{pmatrix} \mapsto f(a : c)$, and we will view the target as a $G^-$-module and a $KZ$-module (whose underlying $K$-module structures agree) by transport of structure.
In particular, compactly inducing to $G$ we obtain actions of Hecke operators $T$ and $S$ as usual, and the maps above induce $k[T, S]$-module isomorphisms
$$
    \cInd_K^G (k) \oplus \cInd_K^G (\Sym^{p-1} (k^2)) \simto \cInd_K^G( \cInd_I^K (k) ).
$$

Consider the following two maps $\phi_1, \phi_2 \colon \cInd_I^G (k) \to \cInd_K^G (k)$.
The first is given simply by $\phi_1([g, a]) = [g, a]$.
The second map $\phi_2$ is defined by $\phi_2([g, a]) = [g \alpha, a]$.
It will be useful to view this map as the composition of the map $[g, a] \mapsto [g, a] \colon \cInd_I^G (k) \to \cInd_{\alpha K \alpha^{-1}}^G (k)$ and the intertwining isomorphism
\begin{align}\label{eqn: intertwining isomorphism}
\begin{split}
    \cInd_{\alpha K \alpha^{-1} }^G (k) & \simto \cInd_{K}^G (k) \\
    [g, a] & \mapsto [g \alpha, a].
\end{split}
\end{align}
It is tedious, but straightforward, to check that the resulting maps
$$
    \cInd_K^G( \cInd_I^K (k)) \simto \cInd_I^G ( k ) \rightrightarrows \cInd_K^G (k)
$$
are $k[T, S]$-equivariant.
One can also check that the composition
$$
    \cInd_K^G(k) \oplus \cInd_K^G (\Sym^{p-1}(k^2)^\vee) \to \cInd_K^G( \cInd_I^K (k)) \xrightarrow{\phi_1 \oplus \phi_2} \cInd_K^G(k) \oplus \cInd_K^G(k)
$$
is of the form $\begin{pmatrix}
    1 & 0 \\
    T & \phi
\end{pmatrix},$ where $\phi$ is also $k[T,S]$-equivariant.

\begin{lemma}\label{lemma: map that factors through steinberg}
    The reduction mod $(T-1, S-1)$ of the homomorphism $\phi$ is a non-zero map $\pi(p-1, 1, 1) \to \pi(0, 1, 1)$.
\end{lemma}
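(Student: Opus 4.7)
The plan is to verify the non-vanishing of $\bar\phi := \phi \pmod{(T-1, S-1)}$ by an explicit computation on one generator, together with a contradiction argument on the Bruhat--Tits tree of $\text{PGL}_2(\Q_p)$.

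First, I would pick $v \in \Sym^{p-1}(k^2)^\vee$ corresponding, under the identification $\Sym^{p-1}(k^2)^\vee \simeq \{f \colon \P^1(\F_p) \to k : \sum_P f(P) = 0\}$, to $f = \delta_\infty - \delta_0$. Taking standard coset representatives $g_{(1:i)} = \begin{pmatrix} 1 & 0 \\ i & 1 \end{pmatrix}$ and $g_\infty = w$ for $K/I$, a direct unwinding of the transitivity isomorphism $\cInd_K^G(\cInd_I^K(k)) \simeq \cInd_I^G(k)$ carries $[1, v]$ to $[w, 1] - [1, 1] \in \cInd_I^G(k)$, and applying $\phi_2$ then yields
$$\phi([1, v]) = [w\alpha, 1] - [\alpha, 1] \in \cInd_K^G(k).$$

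Next, to show that this element is non-zero in $\pi(0, 1, 1) = \cInd_K^G(k)/(T-1, S-1)$, I would argue by contradiction. Suppose $[\alpha, 1] \equiv [w\alpha, 1]$ in $\pi(0, 1, 1)$. After modding by $S = 1$ the centre acts trivially, so the cosets $gK$ correspond to vertices of the $\text{PGL}_2(\Q_p)$-tree. Applying $k \in K$ to the supposed equality gives $[k\alpha K] \equiv [kw\alpha K]$; for $k \equiv \begin{pmatrix} a & b \\ c & d \end{pmatrix} \pmod p$ this identifies the neighbours $(a:c)$ and $(b:d)$ of the base vertex $[K]$ (under $K/I \simeq \P^1(\F_p)$). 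As any two distinct lines in $\F_p^2$ arise as the two columns of some element of $\GL_2(\F_p)$, all $p+1$ neighbours of $[K]$ collapse to a single class, equal to a common element $u$. Then the identity $[K, 1] = T \cdot [K, 1] = \sum_{P \in \P^1(\F_p)} [g_P \alpha, 1] = (p+1) u = u$ in $k$ forces $[K, 1]$ itself to equal $u$; by $G$-equivariance every vertex is identified with each of its neighbours, and connectedness of the tree collapses $\pi(0, 1, 1)$ to an at most one-dimensional quotient of $k \cdot [1,1]$. This contradicts the fact that $\pi(0, 1, 1)$ contains the (infinite-dimensional) Steinberg representation $\St$ as a submodule.

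The main obstacle is the bookkeeping of the first step: one must trace $v$ carefully through the chain of identifications $\Sym^{p-1}(k^2)^\vee \subset \Map(\P^1(\F_p), k) \simeq \cInd_I^K(k)$, then through $\cInd_K^G(\cInd_I^K(k)) \simeq \cInd_I^G(k)$, and finally apply $\phi_2$, in order to reach the clean expression above. Once this is done, the tree-theoretic argument is short, using only the $G$-action and that $p+1 \equiv 1$ in characteristic $p$. Alternatively, after the computation one may simply invoke uniqueness (up to scalar) of non-zero $G$-maps $\pi(p-1, 1, 1) \to \pi(0, 1, 1)$ from \Cref{thm: reminder of mod p representations} to reduce to showing non-vanishing on a single element.
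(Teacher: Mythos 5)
Your proof is correct, but it takes a genuinely different route from the paper. The paper does not compute $\phi$ on any element: it invokes Serre's theorem that ${}^\circ G$ is the amalgam of $K$ and $\alpha K \alpha^{-1}$ along $I$ to get the short exact sequence \Cref{eqn: exact sequence of cInds}, reduces it mod $(T-1,\diamondp-1)$, and concludes $\bar\phi \neq 0$ by comparing Jordan--H\"older constituents of the resulting exact sequence $\pi(0,1,1)\oplus\pi(p-1,1,1)\to\pi(0,1,1)\oplus\pi(0,1,1)\to k\to 0$; that exact sequence is then reused later (it is how the paper recovers Edixhoven--Khare's Lemma~2 in \Cref{subsection: maps in arithmetic cohomology}). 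You instead evaluate $\phi$ on the single vector corresponding to $\delta_\infty-\delta_0$, obtaining $\pm([\alpha,1]-[w\alpha,1])$, and show this is non-zero in $\pi(0,1,1)$ by a hands-on argument on the Bruhat--Tits tree: $K$-transitivity on ordered pairs of distinct neighbours, $T\equiv 1$ together with $p+1\equiv 1$ in $k$, and connectedness of the tree would force $\pi(0,1,1)$ to be at most one-dimensional, contradicting $\St\subseteq\pi(0,1,1)$ from \Cref{thm: reminder of mod p representations}. I checked the two points that need care and they hold: the unwinding $\phi([1,v])=\sum_{x\in K/I}[x\alpha,f(x(1:0))]$ is right (your sign/labelling of $\infty$ versus $0$ differs from the paper's convention $f\mapsto(\begin{smallmatrix}a&b\\c&d\end{smallmatrix}\mapsto f(a:c))$, but this is immaterial), and since $\diamondp$ acts trivially the class of $[g,1]$ in the quotient indeed depends only on the vertex $gZK$, so your collapsing argument is legitimate. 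In spirit your proof is closer to the paper's treatment of the companion map $\psi$ in \Cref{lemma: map that factors through trivial} (explicit evaluation on one element); what the paper's softer argument buys is the reusable exact sequence and no coset bookkeeping, while yours is self-contained, avoids the amalgam theorem, and, as you note, the final appeal to uniqueness of the non-zero map is not even needed, since non-vanishing of $\bar\phi$ on one class already suffices.
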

\begin{proof}
    The following argument is a representation theoretic analogue of the proof of \cite[Lemma 2]{edixhoven_khare_hasse} (in fact, this lemma can be deduced literally from \loccit, for example as a consequence of \Cref{proposition: hasse invariant map is surjective} below).
    Write ${}^\circ G = \{ g \in G: v_p(\det(g)) = 0 \}$.
    Then, by \cite[II.1.4 Theorem 3]{serre_trees}, ${}^\circ G$ is the amalgamated product of $K$ and $\alpha K \alpha^{-1}$ along $I$.
    Thus, there is a Mayer-Vietoris exact sequence in the group homology of $k[G]$,
    $$
        0 \to \cInd_I^G( k ) \to \cInd_K^G (k) \oplus \cInd_{\alpha K \alpha^{-1}}^G (k) \to \cInd_{{}^\circ G}^G (k) \to 0.
    $$
    Composing with the intertwining isomorphism \Cref{eqn: intertwining isomorphism}, we obtain an exact sequence
    \begin{align}\label{eqn: exact sequence of cInds}
        0 \to \cInd_K^G( \cInd_I^K (k)) \stackrel{\phi_1 \oplus \phi_2}\to \cInd_K^G (k) \oplus \cInd_K^G (k) \to \cInd_{{}^\circ G}^G (k) \to 0
    \end{align}
    where the last map is given by $([g_1, a_1], [g_2, a_2]) \mapsto [g_1, a_1] - [g_2 \alpha^{-1}, a_2]$.
    This exact sequence is $k[T, S]$-equivariant if we endow $\cInd_{{}^\circ G}^G (k)$ with the action of $T$ (resp. $S$) given by acting by $\alpha$ (resp. $\beta$).
    Taking the quotient of the exact sequence above by the ideal $(T-1, \diamondp - 1)$, we obtain an exact sequence
    $$
        \pi(0,1,1) \oplus \pi(p-1, 1, 1) \to \pi(0,1,1) \oplus \pi(0,1,1) \to k \to 0,
    $$
    where the first map is given by $\begin{pmatrix} 1 & 0 \\ 1 & \bar\phi \end{pmatrix}$, where $\bar \phi$ is the map induced by $\phi$.
    Looking at the Jordan-H\"older constituents of the terms in the exact sequence, it is clear that $\bar \phi$ cannot be zero.
\end{proof}

Let us also remark that if $R$ is a $k$-algebra, $\tau \in R, \sigma \in R^\times$ and $s \in \Z$, then tensoring $\phi$ with $\omega^{s} \otimes_k R$ and quotienting by $(T - \tau, S - \sigma)$ we get a map
$\tilde \pi(p-1, \tau, \sigma, s)_R \to \tilde \pi(0, \tau, \sigma, s)_R$.

\subsection{The map \texorpdfstring{$\pi(0,1,1) \to \pi(p-1,1,1)$}{pi(0,1,1) -> pi(p-1,1,1)}}
\label{subsection: map pi 2}

There is also a unique-up-to-scalar non-zero map $\pi(0,1,\chi) \to \pi(p-1,1,\chi)$, which factors through $\chi \circ \det$.
The goal of this section is to show that this map comes from specialising a map $\tilde \pi(0, \tau, \sigma, s)_R \to \tilde \pi(p-1, \tau, \sigma, s)_R$ as in the setting of the end of the previous section.
As in the previous section, this is essentially equivalent to the existence of a lift of the map $\pi(0,1,1) \to \pi(p-1,1,1)$ to a $k[T,S]$-equivariant map
$$
    \cInd_K^G (k) \to \cInd_K^G( \Sym^{p-1} (k^2)^\vee).
$$

We will construct such a map by dualising the procedure of the previous section.
Consider the composition
\begin{align}\label{eqn: map from cIndK2 to cIndI}
    \cInd_K^G(k) \oplus \cInd_K^G(k)
    \xrightarrow{\id \oplus \Cref{eqn: intertwining isomorphism}^{-1}} \cInd_K^G(k) \oplus \cInd_{\alpha K \alpha^{-1}}^G(k) \to \cInd_I^G(k),
\end{align}
where the last map is the sum of inclusions.
It is $k[T,S]$-equivariant and the resulting map
$$
    \cInd_K^G(k) \oplus \cInd_K^G(k)
    \to
    \cInd_K^G(k) \oplus \cInd_K^G(\Sym^{p-1}(k^2)^\vee)
$$
is of the form $\begin{pmatrix}
    1 & S^{-1} T \\
    0 & - \psi
\end{pmatrix}$.
Explicitly, $\psi([g,1]) = \sum_{x \in K / I} [\beta^{-1} g x \alpha, e^*]$, where $e^*$ is the element of $\Sym^{p-1}(k^2)^\vee$ corresponding to the polynomial function $Q(x,y) = x^{p-1}$.

\begin{lemma}\label{lemma: map that factors through trivial}
    The reduction mod $(T - 1, S - 1)$ of the homomorphism $\psi$ is a non-zero map $\pi(0,1,1) \to \pi(p-1,1,1)$.
\end{lemma}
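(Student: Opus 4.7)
My plan is to deduce the lemma by combining \Cref{lemma: map that factors through steinberg} with the Hecke-algebraic identity
$$
\phi \circ \psi = S^{-1} T^2 - 1 \quad \text{in} \quad \End_G(\cInd_K^G(k)) \cong k[T, S^{\pm 1}].
$$
I would verify this identity by evaluating both sides on the $K$-invariant generator $[1, 1]_K \in \cInd_K^G(k)$. The definition of $\phi$ via the matrix form of $\phi_1 \oplus \phi_2$ from \Cref{subsection: map pi 1} yields the explicit formula $\phi([g, e^*]) = T[g, 1]_K - [gw\alpha, 1]_K$ for all $g \in G$, where $w = \begin{pmatrix} 0 & 1 \\ 1 & 0 \end{pmatrix}$, because the function $e^*$ on $K/I \cong \P^1(\F_p)$ vanishes exactly at the coset of $w$ and equals $1$ elsewhere. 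Combining this with the formula $\psi([1, 1]_K) = \sum_{x \in K/I}[\beta^{-1} x\alpha, e^*]$, the matrix identity $\alpha w \alpha = p w$, and the relation $[\beta^{-1}g, 1] = S^{-1}[g, 1]$, a short calculation gives $\phi\psi([1, 1]_K) = (S^{-1}T^2 - 1)[1, 1]_K$; since $[1, 1]_K$ generates $\cInd_K^G(k)$ as a $G$-module, this establishes the identity.

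Next I would argue by contradiction. Suppose $\bar\psi = 0$; then $\psi([1, 1]_K) = (T-1)v_1 + (S-1)v_2$ for some $v_1, v_2 \in \cInd_K^G(\Sym^{p-1}(k^2)^\vee)$. Applying $\phi$ and using the factorization $S^{-1}T^2 - 1 = S^{-1}(T-1)(T+1) - S^{-1}(S-1)$ yields
$$
(T-1)\bigl(\phi(v_1) - S^{-1}(T+1)[1, 1]_K\bigr) = -(S-1)\bigl(\phi(v_2) + S^{-1}[1, 1]_K\bigr).
$$
The pair $(T-1, S-1)$ forms a regular sequence on $\cInd_K^G(k)$, which reduces to the injectivity of $T-1$ on $\cInd_K^G(k)/(S-1)$ and follows from the $\Z$-grading on $G/K$ by $v_p \circ \det$, on which $T$ shifts by one. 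Consequently, $\phi(v_1) - S^{-1}(T+1)[1, 1]_K$ lies in $(S-1)\cInd_K^G(k)$; reducing modulo $(T-1, S-1)$ yields $\bar\phi(\overline{v_1}) = 2 \cdot \overline{[1, 1]_K}$ in $\pi(0, 1, 1)$, with the factor $2$ coming from $(T+1)|_{T = 1}$.

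This produces the contradiction. By \Cref{lemma: map that factors through steinberg}, $\bar\phi$ factors through the Steinberg quotient of $\pi(p-1, 1, 1)$, so its image is contained in $\St \subseteq \pi(0, 1, 1)$. But $\overline{[1, 1]_K}$ is a non-zero $K$-invariant of $\pi(0, 1, 1)$ that projects to a generator of the trivial quotient $\pi(0, 1, 1)/\St \cong k$; since $p \geq 5$, the vector $2 \cdot \overline{[1, 1]_K}$ also has non-zero image in this quotient and therefore does not lie in $\St$, contradicting the previous display. The most delicate technical point in writing this up will be verifying the regular-sequence property invoked above, which is where I expect to spend the most effort.
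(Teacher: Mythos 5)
Your argument is correct, but it is a genuinely different route from the one taken in the paper. The paper's proof is a direct computation: using $\psi([1,1])=T[\beta^{-1},e^*]+[\beta^{-1}w\alpha,e^*]$, it suffices to check that $[1,e^*]+[w\alpha,e^*]$ is non-zero in $\pi(p-1,1,1)$, which is done by evaluating its image under the explicit Barthel--Livn\'e map to the principal series of \Cref{thm: reminder of mod p representations} \Cref{item: isomorphism to principal series}. You instead prove the Hecke-algebra identity $\phi\circ\psi=S^{-1}T^2-1$ (which I have checked: your formula $\phi([g,e^*])=T[g,1]-[gw\alpha,1]$, the identity $\alpha w\alpha=\beta w$, and the congruence $p+1\equiv 1$ do give $\phi\psi([1,1])=(S^{-1}T^2-1)[1,1]$ on the generator, hence the identity), and then derive a contradiction from $\bar\psi=0$ using the regular-sequence property, \Cref{lemma: map that factors through steinberg}, and the fact that the image of any non-zero map $\pi(p-1,1,1)\to\pi(0,1,1)$ is the Steinberg subrepresentation while $2\cdot\overline{[1,1]}$ is not in it (since $\overline{[1,1]}$ generates $\pi(0,1,1)$, it cannot lie in a proper subrepresentation). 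The paper's computation is shorter and independent of \Cref{lemma: map that factors through steinberg}; your argument is longer but yields the identity $\phi\psi=S^{-1}(T^2-S)$ as a by-product, which is of independent interest: its specialisation vanishes exactly when $\tau^2=\sigma$, neatly explaining why the composite $\bar\phi\circ\bar\psi$ dies precisely in the non-generic case, in line with the classical relation between the two level-raising degeneracy maps.

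Two points to tighten when writing this up. First, your justification of the key injectivity of $T-1$ on $\cInd_K^G(k)/(S-1)$ via the $\Z$-grading by $v_p\circ\det$ does not work as stated: after killing $S-1$ that grading only survives modulo $2$ (since $\beta$ has determinant of valuation $2$), and an operator shifting a $\Z/2$-grading need not have $T-1$ injective for formal reasons. This is genuinely the Barthel--Livn\'e injectivity statement; but it is exactly the regular-sequence fact recorded in the unlabelled lemma at the start of \Cref{subsection: parithmetic and arithmetic} (via \cite[Lemma 4.10]{patching_gl2}), so you should simply cite that rather than argue by grading. Second, the assertion that $\overline{[1,1]_K}$ has non-zero image in the trivial quotient of $\pi(0,1,1)$ deserves a line of justification (e.g.\ the generation argument above, or its image under the map of \Cref{thm: reminder of mod p representations} being the constant function $1$).
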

\begin{proof}
    As for \Cref{lemma: map that factors through steinberg}, this follows from another result for group cohomology (namely, \Cref{proposition: steinberg map is zero} below), but we give another proof in a more representation-theoretic spirit.
    Note that $\alpha^{-1} x^{-1} e^*$ is equal to $e^*$ for any $x \in K$ which is not in the same left $I$-coset as $w := \begin{pmatrix}
    0 & 1 \\ 1 & 0
    \end{pmatrix}$, and vanishes if $x \in wI$.
    In particular, $\psi([g,1]) = T [\beta^{-1}, e^*] + [\beta^{-1} w \alpha, e^*]$.
    Hence, it's enough to show that $[1, e^*] + [w \alpha, e^*]$ defines a non-zero element of $\pi(p-1,1,1)$.
    To do this, we will check that its image under the map of \Cref{thm: reminder of mod p representations} \Cref{item: isomorphism to principal series} is non-zero.
    This map is defined in \cite[Section 6.2]{barthel_livne} and sends $[g, Q]$ to $h \mapsto Q(x(1:0))$ where we have written $g^{-1} h = x b$ with $x \in K$ and $b \in B$.
    The image of $[1, e^*] + [w \alpha, e^*]$ maps $w$ to $1$, so in particular it is non-zero (in fact, as we would expect, it is the constant function with value 1).
\end{proof}

\subsection{The resulting maps on arithmetic cohomology}
\label{subsection: maps in arithmetic cohomology}

\begin{proposition}\label{proposition: hasse invariant map is surjective}
    Let $R = k[\tau, \sigma, \sigma^{-1}]$ be as in \Cref{remark: reason for families} and $s \in \Z$.
    Then, the  map
    $\tilde \phi \colon H_*(\Gamma_1(N), \Sym^{p-1} (k^2)^\vee \otimes \omega^{s}) \to H_*(\Gamma_1(N), \omega^{s} \circ \det)$
    induced from the map
    $$
        \tilde \pi(p-1, \tau, \sigma, s)_R \to \tilde \pi(0, \tau, \sigma, s)_R
    $$
    defined at the end of \Cref{subsection: map pi 1}
    is a twist of the dual of the map in \cite[Lemma 2]{edixhoven_khare_hasse}.
    In particular, if $p \geq 5$, this map is surjective in degree 1.
\end{proposition}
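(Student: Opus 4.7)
The plan is to explicitly trace through the construction of $\tilde\phi$ at the chain level and then identify it with the dual of the Edixhoven--Khare Hasse invariant map. First, by \Cref{remark: reason for families} specialised at $R = k[\tau,\sigma,\sigma^{-1}]$, the complex $C_\bullet(U^p_1(N), \tilde\pi(r,\tau,\sigma,s)_R)$ is canonically quasi-isomorphic to $C_\bullet(U^p_1(N) K, \Sym^r(k^2)^\vee \otimes \omega^s)$ as a complex of $R$-modules, with $\tau$ and $\sigma$ acting through the Hecke operators $T$ and $S$. The $G$-equivariant map defined at the end of \Cref{subsection: map pi 1} therefore descends to a well-defined (non-derived) $R$-linear map $\tilde\phi$ of arithmetic homology with values as in the statement.

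Next, I would unpack $\tilde\phi$ at the chain level. The $K$-equivariant splitting $\cInd_I^K(k) \simeq k \oplus \Sym^{p-1}(k^2)^\vee$ lifts to the $G^-$- and $KZ$-module structures, and the composite map
\[
    \cInd_K^G(k) \oplus \cInd_K^G(\Sym^{p-1}(k^2)^\vee) \simto \cInd_K^G(\cInd_I^K(k)) \xrightarrow{\phi_1 \oplus \phi_2} \cInd_K^G(k) \oplus \cInd_K^G(k)
\]
has the matrix form $\begin{pmatrix} 1 & 0 \\ T & \phi \end{pmatrix}$ recorded in \Cref{subsection: map pi 1}. Restricting to the second summand of the source and projecting onto the second summand of the target gives a completely explicit formula for $\phi$ in terms of the sum-over-$K/I$ defining $T$ and the intertwiner \Cref{eqn: intertwining isomorphism}. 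Transporting through the isomorphism of \Cref{subsection: parithmetic and arithmetic} and twisting by $\omega^s$ then yields a cycle-level description of $\tilde\phi$ purely in terms of the coefficient systems $\Sym^{p-1}(k^2)^\vee$ and $k$ and averaging over $K/I = \P^1(\F_p)$.

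The third step is the comparison. The map of \cite[Lemma 2]{edixhoven_khare_hasse} realises multiplication by the Hasse invariant as a $\Gamma_1(N)$-equivariant chain map $k \to \Sym^{p-1}(k^2)$ obtained, after a suitable identification of $\Sym^{p-1}(k^2)$ with functions on $\P^1(\F_p)$ of total sum zero, from the same splitting $\Map(\P^1(\F_p),k) \simeq k \oplus \Sym^{p-1}(k^2)^\vee$ that we used above; dually, this corresponds to the projection $\cInd_I^K(k) \twoheadrightarrow \Sym^{p-1}(k^2)^\vee$. Matching the two formulas exhibits $\tilde\phi$ as a twist (by $\omega^s$) of the $k$-linear dual of the Edixhoven--Khare map. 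The surjectivity of $\tilde\phi$ in degree $1$ then follows by duality over the field $k$ from the injectivity in degree $1$ cohomology established by Edixhoven--Khare for $p \geq 5$.

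The main obstacle is the bookkeeping in the second and third steps, namely unwinding the effect of the intertwiner \Cref{eqn: intertwining isomorphism} and of the Mayer--Vietoris decomposition from the proof of \Cref{lemma: map that factors through steinberg} carefully enough to match the explicit Edixhoven--Khare cocycle formula on the nose, modulo the expected $\omega^s$-twist. Once the dictionary is in place, the final surjectivity assertion is a formal consequence of their injectivity result together with the finite-dimensionality of the relevant cohomology groups.
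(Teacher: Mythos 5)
Your proposal is correct and essentially coincides with the paper's (much terser) proof: the paper likewise asserts that the identification with the Edixhoven--Khare map follows from unwinding the two constructions (after reducing to $s=0$ by twisting, via \cite[Corollary 2.11]{buzzard_diamond_jarvis}) and then obtains surjectivity in degree $1$ by dualising the injectivity statement of \cite[Lemma 2]{edixhoven_khare_hasse}, exactly as you do. One small caution: your phrase ``$\Gamma_1(N)$-equivariant chain map $k \to \Sym^{p-1}(k^2)$'' should not be read as a map of coefficient systems (none exists, since $\Sym^{p-1}(k^2)$ is irreducible as an $\SL_2(\F_p)$-representation); as your following clause correctly indicates, the Edixhoven--Khare map is realised through the splitting of the induced module at $\Gamma_1(N)\cap\Gamma_0(p)$-level, i.e.\ by restriction, the $\Gamma_0(p)$-equivariant coefficient map, and transfer.
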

\begin{proof}
    By \cite[Corollary 2.11]{buzzard_diamond_jarvis}, we may assume $s = 0$.
    The first sentence follows from the constructions of both maps, and the second follows from \cite[Lemma 2]{edixhoven_khare_hasse}.
\end{proof}

As mentioned above, the proof of \Cref{lemma: map that factors through steinberg} is a representation-theoretic analogue the proof of \cite[Lemma 2]{edixhoven_khare_hasse}.
The latter can then be recovered by taking $p$-arithmetic homology of the exact sequence \Cref{eqn: exact sequence of cInds}.
To see this, we need the following result.

\begin{lemma}
    If $p \geq 5$, the $p$-arithmetic homology $H_1(\Gamma^p_1(N), \cInd_{{}^\circ G}^G (k))$ vanishes.
\end{lemma}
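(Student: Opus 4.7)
The plan is to deduce the vanishing from the long exact sequence in $p$-arithmetic homology attached to \Cref{eqn: exact sequence of cInds}, using \Cref{proposition: hasse invariant map is surjective} as the key input.

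First I would take $p$-arithmetic homology of the exact sequence
$$
    0 \to \cInd_K^G(\cInd_I^K k) \xrightarrow{\phi_1 \oplus \phi_2} \cInd_K^G k \oplus \cInd_K^G k \to \cInd_{{}^\circ G}^G k \to 0
$$
and apply the identification $H_*(\Gamma^p_1(N), \cInd_K^G V) \simeq H_*(\Gamma_1(N), V)$ from \Cref{subsection: parithmetic and arithmetic} together with the $K$-splitting $\cInd_I^K k \simeq k \oplus \Sym^{p-1}(k^2)^\vee$ recalled in \Cref{subsection: map pi 1}.  This converts $(\phi_1 \oplus \phi_2)_*$ into the block matrix $\begin{pmatrix} 1 & 0 \\ T_* & \tilde\phi_* \end{pmatrix}$ from $H_*(\Gamma_1(N), k) \oplus H_*(\Gamma_1(N), \Sym^{p-1}(k^2)^\vee)$ to $H_*(\Gamma_1(N), k)^{\oplus 2}$, where $\tilde\phi_*$ is the Hasse-invariant map of \Cref{proposition: hasse invariant map is surjective} at $s = 0$, in view of the explicit matrix form at the end of \Cref{subsection: map pi 1}.

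Next I would check injectivity of this matrix in degree $0$, so that the connecting homomorphism from $H_1(\Gamma^p_1(N), \cInd_{{}^\circ G}^G k)$ to $H_0(\Gamma^p_1(N), \cInd_I^G k)$ vanishes and $H_1(\Gamma^p_1(N), \cInd_{{}^\circ G}^G k)$ identifies with the cokernel of the block matrix on $H_1$.  This is immediate: $H_0(\Gamma_1(N), \Sym^{p-1}(k^2)^\vee) = 0$ because $\Sym^{p-1}(k^2)^\vee$ is the Steinberg representation of $\SL_2(\F_p)$ (irreducible and non-trivial), while $T_*$ acts on $H_0(\Gamma_1(N), k) = k$ by the degree $p+1 \equiv 1 \pmod p$, making the degree-$0$ component the diagonal injection $k \hookrightarrow k \oplus k$.

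Finally I would observe that surjectivity of $\tilde\phi_*$ on $H_1$ (provided by \Cref{proposition: hasse invariant map is surjective}, which is exactly where the hypothesis $p \geq 5$ enters) implies surjectivity of the whole block matrix on $H_1$: given any target $(a, b) \in H_1(\Gamma_1(N), k)^{\oplus 2}$, take $x = a$ in the first summand and choose any preimage $y$ of $b - T_*(a)$ under $\tilde\phi_*$ in the second.  The cokernel is then zero, giving the desired vanishing.  The main obstacle is therefore entirely contained in \Cref{proposition: hasse invariant map is surjective}; the rest is bookkeeping with the long exact sequence and the matrix form of $\phi_1 \oplus \phi_2$ recorded in \Cref{subsection: map pi 1}.
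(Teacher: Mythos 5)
Your argument is correct, but it runs the paper's logic in the opposite direction, so it is a genuinely different proof. The paper proves this lemma with no input from mod $p$ modular forms of weight $p+1$: it writes $G = \Gamma_1^p(N)\,{}^\circ G$ and notes $\Gamma_1^p(N)\cap{}^\circ G = \Gamma_1^p(N)\cap\SL_2(\Q)$, so that $\cInd_{{}^\circ G}^G(k)$ restricted to $\Gamma_1^p(N)$ is induced from the trivial module of this $p$-arithmetic subgroup; Shapiro's lemma then reduces the statement to the vanishing of the degree-one homology of that group with coefficients in $k$, which is \cite[proof of Lemma 1]{edixhoven_khare_hasse} (this is where $p\geq 5$ enters for the paper). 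You instead feed in \Cref{proposition: hasse invariant map is surjective} --- whose proof rests on \cite[Lemma 2]{edixhoven_khare_hasse} --- and extract the vanishing from the long exact sequence attached to \Cref{eqn: exact sequence of cInds}. Your bookkeeping is sound: the identification of the map on homology with $\begin{pmatrix} 1 & 0 \\ T & \tilde\phi \end{pmatrix}$ is exactly what the paper uses in the commutative diagram following the lemma; the degree-zero injectivity holds because of the identity in the upper-left block together with $H_0(\Gamma_1(N),\Sym^{p-1}(k^2)^\vee)=0$ (the coefficient module is irreducible and nontrivial for the image of $\Gamma_1(N)$ in $\SL_2(\F_p)$, so its coinvariants vanish --- the value of $T$ on $H_0(\Gamma_1(N),k)$ is irrelevant); and a lower-triangular matrix with identity and surjective diagonal entries is surjective on $H_1$, killing the cokernel term. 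Since \Cref{proposition: hasse invariant map is surjective} is proved independently of this lemma, there is no circularity in the paper's overall structure. What your route costs is the purpose the lemma serves in the paper: immediately after it, the paper runs your long exact sequence in reverse to deduce surjectivity of $\tilde\phi$ and thereby ``recover'' \cite[Lemma 2]{edixhoven_khare_hasse} from the representation-theoretic exact sequence; with your proof that remark becomes circular, since that very result is your input. The paper's proof buys independence from the Hasse-invariant statement at the price of invoking the group-theoretic vanishing for the $p$-arithmetic group, while yours buys a purely diagram-chasing deduction from results already stated in \Cref{subsection: maps in arithmetic cohomology}.
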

\begin{proof}
    As $G = \Gamma_1^p(N) {}^\circ G$ and $\Gamma_1^p(N) \cap {}^\circ G = \Gamma_1^p(N) \cap \SL_2(\Q)$, the natural restriction map $\cInd_{{}^\circ G}^G (k) \to \cInd_{\Gamma_1^p(N) \cap \SL_2(\Q)}^{\Gamma_1^p(N)} (k)$ is an isomorphism of representations of $\Gamma^p_1(N)$.
    The lemma follows by Shapiro's lemma and \cite[Proof of Lemma 1]{edixhoven_khare_hasse}.
\end{proof}

Thus, when $p \geq 5$, we have a commutative diagram
$$
\begin{tikzcd}[ampersand replacement=\&]
	{H_1(\Gamma_1(N) \cap \Gamma_0(p), k) } \& {H_1(\Gamma_1(N), k) \oplus H_1(\Gamma_1(N), k)} \\
	{H_1(\Gamma_1(N), k) \oplus H_1(\Gamma_1(N), \Sym^{p-1} (k^2)^\vee).}
	\arrow["\sim", from=1-1, to=2-1]
	\arrow[two heads, from=1-1, to=1-2]
	\arrow["{{\begin{pmatrix} \id & 0 \\ T & \tilde \phi \end{pmatrix}}}"'{pos=0.6}, from=2-1, to=1-2]
\end{tikzcd}
$$
In particular, $\tilde \phi$ is surjective, so we have indeed recovered \cite[Lemma 2]{edixhoven_khare_hasse}.
Moreover, we see that the kernel of $\tilde \phi$ is isomorphic to the kernel of the map
\begin{align}\label{eqn: map that I want to know the kernel of}
    H_1(\Gamma_1(N) \cap \Gamma_0(p), k) \onto H_1(\Gamma_1(N), k) \oplus H_1(\Gamma_1(N), k).
\end{align}
This is the homomorphism induced by the maps between open modular curves determined by $\tau \mapsto \tau$ and $\tau \mapsto p \tau$ on the upper-half plane.
A generalisation by Wiles of a lemma of Ribet determines the Galois representations that contribute to this kernel.

\begin{proposition}\label{lemma: ribets lemma}
    Let $\rho$ be a 2-dimensional odd irreducible representation of $\Gal(\bar \Q / \Q)$ over $k$ such that $N(\rho)$ divides $N$ and $\rho|_{\Gcal_p} \simeq \begin{pmatrix} 1 & * \\ 0 & \omega^{-1} \end{pmatrix} \otimes \unr{b}$.
    Then, the localisation at the maximal ideal $\m_\rho$ of $\Tbb(pN)$ corresponding to $\rho$ of the kernel of \Cref{eqn: map that I want to know the kernel of} is non-zero.
\end{proposition}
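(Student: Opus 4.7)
The plan is to identify the kernel of \eqref{eqn: map that I want to know the kernel of} as the mod-$p$ $p$-new subspace of $H_1(\Gamma_1(N)\cap\Gamma_0(p),k)$ and then invoke a level-raising theorem of Wiles that generalizes Ribet's lemma. Since the two component maps are induced by the inclusion $\Gamma_1(N)\cap\Gamma_0(p)\hookrightarrow\Gamma_1(N)$ and by conjugation by $\alpha$ (equivalently, by the two geometric degeneracy correspondences $\tau\mapsto\tau$ and $\tau\mapsto p\tau$), their common kernel is by definition the mod-$p$ $p$-new subspace.

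Via Eichler--Shimura together with the usual torsion-freeness statement for non-Eisenstein localizations of the integral $H_1$ (valid since $\rho$ is irreducible and $p\geq 5$), one may identify the localization at $\m_\rho$ of this $p$-new subspace with the mod-$p$ reduction of the $\rho$-isotypic part of the space of weight-$2$ $p$-new cuspidal eigenforms of level $\Gamma_1(N)\cap\Gamma_0(p)$. The problem thus reduces to producing such a $p$-new eigenform $f$ whose attached $p$-adic Galois representation reduces to $\rho$ modulo $p$.

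Under our hypothesis that $\rho|_{\Gcal_p} \simeq \begin{pmatrix} 1 & * \\ 0 & \omega^{-1}\end{pmatrix} \otimes \unr{b}$, the characteristic polynomial of $\rho(\Frob_p)$ reduces to $(X-b)^2$ modulo $p$, which is precisely the Ribet level-raising congruence at $p$ (in the weight-$2$ case). Wiles' extension of Ribet's lemma, which accommodates both the peu and tr\`es ramifi\'ee local extension classes, and which applies irrespective of whether $\rho$ is already known to be modular at level $\Gamma_1(N)$, then yields the desired $p$-new eigenform $f$ and hence the non-vanishing of the localized kernel.

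The main obstacle is the reduction from mod-$p$ group homology back to characteristic-zero modular forms, which rests on the non-Eisenstein torsion-freeness statement mentioned above. For irreducible $\rho$ this is classical, going back to Mazur, Ribet, and Gross, so no substantive additional work is required beyond citing the relevant results; once this identification is in place, the conclusion is a direct appeal to Wiles.
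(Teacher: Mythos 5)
There is a genuine gap at the heart of your argument: the input you invoke does not exist in the form you cite. Ribet's level-raising theorem (and its refinements by Diamond, Taylor, et al.) produces a $q$-new form at a prime $q$ \emph{different from} the residue characteristic; what you need is level raising \emph{at} $p$ with mod $p$ coefficients, i.e.\ the existence of a $p$-new weight-$2$ eigenform of level $\Gamma_1(N)\cap\Gamma_0(p)$ whose residual representation is $\rho$. That is a genuinely different statement, and ``Wiles' extension of Ribet's lemma'' is not it: Lemma 2.3 of \cite{wiles_fermat} is a statement about the degeneracy maps between $J_1(N)$ and $J_1(N,p)$ (and the injectivity statement (2.10) is an Ihara-type lemma), not a theorem producing $p$-new forms. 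Indeed, the standard way to \emph{prove} level raising at $p$ in the ordinary setting is precisely the computation at issue here — push a $p$-stabilised old class into level $Np$ via the degeneracy maps, use Ihara's lemma for non-vanishing, and use the congruence $a_p^2\equiv\chi(p)$ to see it is killed by the traces — so citing such a result wholesale either leaves the main content unproven or is circular. (Your parenthetical that the result ``applies irrespective of whether $\rho$ is already known to be modular at level $\Gamma_1(N)$'' also signals a confusion: any level-raising statement takes modularity at the base level as input; here it is supplied, in the peu ramifi\'ee case, by \Cref{thm: serre weight conjecture}.) A secondary inaccuracy: the localisation at $\m_\rho$ of the kernel of \Cref{eqn: map that I want to know the kernel of} is \emph{not} the mod-$p$ reduction of the $p$-new part — it also contains $p$-stabilisations of $p$-old classes whenever the congruence $a_p^2\equiv\chi(p)$ holds, and that is exactly the phenomenon being exploited. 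For your purposes only the easy containment (integral $p$-new classes have vanishing trace, hence their reductions lie in the kernel) is needed, but the ``identification'' as stated is unjustified.

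For comparison, the paper's proof avoids new forms entirely. It first disposes of the tr\`es ramifi\'ee case trivially ($\rho$ then occurs in the source of \Cref{eqn: map that I want to know the kernel of} but, by \Cref{thm: serre weight conjecture}, not in the target). In the peu ramifi\'ee case it takes a weight-$2$ newform $f$ of level $\Gamma_1(N)$ lifting $\rho$, forms the $p$-stabilisation of the corresponding mod-$p$ class $P$ by the unit root $\alpha$, maps $(P,0)$ into $(k\otimes_{\F_p}J_1(N,p)[p])_{\m_\rho}$ via the degeneracy (Picard) maps, and checks directly that this image is non-zero (by the injectivity of Wiles' (2.10)) and lies in the kernel of the trace maps, using $\alpha^2\equiv a_p^2\equiv\chi(p)$, which comes from \cite[Theorem 2.5]{edixhoven_weight}. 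If you want to salvage your route, you would have to either prove the level-raising-at-$p$ statement (which essentially reproduces the paper's computation) or locate a reference that proves it by independent means; as written, the key step is unsupported.
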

\begin{proof}
    If the extension $*$ in the statement is tr\`es ramifi\'ee, then it is clear that $\rho$ contributes to the kernel as it contributes to the source but not the target.
    Therefore, we may assume that the extension is peu ramifi\'ee.
    Let $f$ be a normalised newform of weight 2, level $\Gamma_1(N)$ and character $\chi$ whose associated Galois representation over $k$ is isomorphic to $\rho$.
    If $a_p$ is its $p$-th Fourier coefficient, let $\alpha$ be the root of $x^2 - a_p x - \chi(p) p$ that is a $p$-adic unit (there should be no confusion with our previous use of the letter $\alpha$). Then $\alpha^2 \equiv a_p^2 \equiv \chi(p) \mod p$, the second congruence following from \cite[Theorem 2.5]{edixhoven_weight}.

    There is an eigenclass in $H_1(\Gamma_1(N), \bar \Z_p)$ for the Hecke operators away from $N$ with the same system of eigenvalues as $f$ and whose reduction to $H_1(\Gamma_1(N), k)$ is non-zero.
    As $\rho$ is irreducible, the localisation of $H_1(\Gamma_1(N), k)$ at $\m_\rho$ is isomorphic to $(k \otimes_{\F_p} J_1(N)[p])_{\m_\rho}$, where $J_1(N)$ is the Jacobian of the compactified modular curve of level $\Gamma_1(N)$.
    Write $P$ for the element of $(k \otimes_{\F_p} J_1(N)[p])_{\m_\rho}$ corresponding to the reduction of the eigenclass above.
    Similarly, $H_1(\Gamma_1(N) \cap \Gamma_0(N), k)_{\m_\rho} \simeq (k \otimes_{\F_p} J_1(N, p)[p])_{\m_\rho}$, where $J_1(N, p)$ is the Jacobian of the compactified modular curve of level $\Gamma_1(N) \cap \Gamma_0(p)$.
    Consider now the image of $(P,0) = (P, -(a_p - \alpha) P)$ under the map
    $$
        (k \otimes_{\F_p} J_1(N)[p])_{\m_\rho}
        \oplus
        (k \otimes_{\F_p} J_1(N)[p])_{\m_\rho}
        \to
        (k \otimes_{\F_p} J_1(N, p)[p])_{\m_\rho}
    $$
    induced by the morphisms between modular curves determined by $\tau \mapsto \tau$ and $\tau \mapsto p \tau$ on the upper-half plane.
    The image of $(P, 0)$ is then one of the $p$-stabilisations of $P$:
    it is an eigenvector for all Hecke operators $T_\ell$ for $ \ell \nmid pN$ (with eigenvalues determined by $\rho$), as well as the operator $U_p$ (resp. $\langle n \rangle$ for any $n \in \Z$ with $n \equiv p \mod N$ and $n \equiv 1 \mod p$) with eigenvalue $\alpha$ (resp. $\chi(p)$).
    It is also non-zero (for example, by \Cref{proposition: steinberg map is zero} and its proof below, or by the injectivity of \cite[(2.10)]{wiles_fermat}).
    Hence, by \cite[Lemma 2.3]{wiles_fermat}, it defines (under the isomorphisms above between group homology and $p$-torsion points in Jacobians) a non-zero element of the localisation at $\m_\rho$ of the kernel of \Cref{eqn: map that I want to know the kernel of}.
\end{proof}

Finally, we turn to the map from \Cref{subsection: map pi 2}.

\begin{proposition}\label{proposition: steinberg map is zero}
    Let $R = k[\tau, \sigma, \sigma^{-1}]$ be as in \Cref{remark: reason for families} and $s \in \Z$. Let $\rho$ be a 2-dimensional odd irreducible representation.
    Then, the  map
    $$
        H_*(\Gamma_1(N), \omega^{s} \circ \det)_{\rho} \to H_*(\Gamma_1(N), \Sym^{p-1} (k^2)^\vee \otimes_k \omega^{s})_{\rho}
    $$
    induced from the map
    $
        \tilde \pi(0, \tau, \sigma, s)_R \to \tilde \pi(p-1, \tau, \sigma, s)_R
    $
    is injective in degree 1.
    If $N(\rho)$ divides $N$ and $\rho|_{\Gcal_p} \simeq \begin{pmatrix} 1 & * \\ 0 & \omega^{-1} \end{pmatrix} \otimes \omega^s \unr{b}$, then the cokernel is non-zero.
\end{proposition}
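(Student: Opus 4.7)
The plan is to derive a Hecke identity $\tilde\phi \circ \tilde\psi = S^{-1}T^2 - 1$ that settles the generic cases, and then to handle the remaining peu ramifi\'ee special case by identifying the underlying chain-level map with the pullback along the two degeneracy maps between $X_1(N,p)$ and $X_1(N)$.

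I will first compute the composition $A \circ B$ of the matrices $A = \begin{pmatrix}1 & 0 \\ T & \phi\end{pmatrix}$ (from \Cref{subsection: map pi 1}) and $B = \begin{pmatrix}1 & S^{-1}T \\ 0 & -\psi\end{pmatrix}$ (from \Cref{subsection: map pi 2}) in two ways. The matrix product equals $\begin{pmatrix}1 & S^{-1}T \\ T & TS^{-1}T - \phi\psi\end{pmatrix}$, while $A \circ B$ also realises the composition $\cInd_K^G(k)^2 \to \cInd_I^G(k) \to \cInd_K^G(k)^2$ via the star inclusions followed by the coset projections $\phi_1, \phi_2$, which a direct Bruhat--Tits tree computation evaluates to $\begin{pmatrix}1 & S^{-1}T \\ T & 1\end{pmatrix}$. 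Comparing yields $\phi \psi = S^{-1}T^2 - 1$, so on passing to $p$-arithmetic homology (\Cref{remark: reason for families}), the induced map $\tilde\phi \circ \tilde\psi$ acts on $H_1(\Gamma_1(N), \omega^s)$ as the Hecke operator $S^{-1}T^2 - 1$.

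For the generic cases I split according to \Cref{thm: serre weight conjecture}. If $\rho|_{\Gcal_p}$ is not of the form $\begin{pmatrix} 1 & * \\ 0 & \omega^{-1} \end{pmatrix} \otimes \omega^s \unr{b}$, then the $(T,S)$-eigenvalues $(\tau, \sigma)$ on $H_1(\Gamma_1(N), \omega^s)_\rho$ always satisfy $\sigma^{-1}\tau^2 \neq 1$ (either $\tau = 0$ in the supersingular case, or $\sigma^{-1}\tau^2 = \lambda^2 \neq 1$ in the generic principal-series case), so $\tilde\phi\tilde\psi$ is a non-zero scalar and $\tilde\psi$ is injective. If $\rho$ is special but tr\`es ramifi\'ee, the source vanishes by Serre's theorem, and injectivity is trivial.

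The main obstacle is the peu ramifi\'ee special case, where $\tilde\phi\tilde\psi = 0$. Here the plan is to use Shapiro's lemma to identify $V_0 \oplus V_1 := H_1(\Gamma_1(N), \omega^s)_\rho \oplus H_1(\Gamma_1(N), \Sym^{p-1}(k^2)^\vee \otimes \omega^s)_\rho$ with $H_1(\Gamma_1(N) \cap \Gamma_0(p), \omega^s)_\rho$, under which $B$ corresponds to the sum of pullbacks by the two degeneracy maps $\tau \mapsto \tau$ and $\tau \mapsto p\tau$. For each newform $f$ contributing to the $\rho$-localisation, the images $B(P_f, 0)$ and $B(0, P_f)$ span the plane generated by the two mod-$p$ stabilisations $f_\alpha, f_\beta$; since $\rho$ is ordinary with unit root $\alpha \equiv b \not\equiv 0$ and non-unit root $\beta \equiv 0$ mod $p$, these are $U_p$-eigenforms with distinct eigenvalues, hence linearly independent, and non-vanishing after $\m_\rho$-localisation by \Cref{lemma: ribets lemma}. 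This shows $B$, hence $\tilde\psi$, is injective. For the cokernel, $\im \tilde\psi \subseteq \ker \tilde\phi$; in the peu ramifi\'ee case the cokernel surjects onto the non-zero $V_0$ by surjectivity of $\tilde\phi$ (\Cref{proposition: hasse invariant map is surjective}), while in the tr\`es ramifi\'ee case $\tilde\psi = 0$ and $V_1 \neq 0$ by Serre's theorem applied at $r = p-1$.
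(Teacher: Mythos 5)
Your Hecke identity $\phi\psi = S^{-1}T^2 - 1$ (obtained by computing the composite $\cInd_K^G(k)^2 \to \cInd_I^G(k) \to \cInd_K^G(k)^2$ in two ways, the direct computation giving $\begin{pmatrix} 1 & S^{-1}T \\ T & 1 \end{pmatrix}$ since $[K:I]=[\alpha K\alpha^{-1}:I]=p+1\equiv 1$) is correct and gives a genuinely different, and cleaner, treatment of the generic case than the paper, which instead identifies $\tilde\psi_\rho$ with the Poincar\'e dual of the Edixhoven--Khare map $\tilde\phi$ and quotes its surjectivity. However, the case your identity cannot see --- $\rho|_{\Gcal_p}$ an extension of $\omega^{s-1}\unr{b}$ by $\omega^s\unr{b}$, where $\sigma^{-1}\tau^2=1$ --- is precisely the case the proposition is used for, and there your argument has two genuine gaps. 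First, injectivity: showing that $B(P_f,0)$ and $B(0,P_f)$ span a plane containing the two $p$-stabilisations only proves that the transfer pair is injective on the specific subspaces spanned by reductions of newform eigenclasses; it does not control the kernel of the localised map on all of $H_1(\Gamma_1(N),\omega^s)_\rho$, which need not be spanned by such classes (multiplicities and non-semisimplicity of the Hecke action mod $p$ are exactly the issues in play here). The input actually needed is an Ihara-type statement that this kernel is Eisenstein (e.g.\ the injectivity of Wiles's (2.10), or, as in the paper, duality against the surjectivity in \Cref{proposition: hasse invariant map is surjective}); your citation of \Cref{lemma: ribets lemma} does not supply this, since that lemma asserts non-vanishing of the kernel of the \emph{other} (pushforward) map, not injectivity of $B$.

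Second, your cokernel argument rests on the containment $\im\tilde\psi \subseteq \ker\tilde\phi$, i.e.\ on $\tilde\phi\tilde\psi = 0$ after localisation at $\rho$. Your identity only gives that $\tilde\phi\tilde\psi$ acts as $S^{-1}T^2-1$, whose generalised eigenvalue on the localisation is $0$; since $T$ and $S$ need not act semisimply on $H_1(\Gamma_1(N),\omega^s)_\rho$, this operator is merely nilpotent, not zero, and the containment is unjustified. (This particular gap is repairable with your own tools: if $\tilde\psi_\rho$ were surjective, then $\tilde\phi_\rho\tilde\psi_\rho$ would be surjective onto $V_{0}\neq 0$ by \Cref{proposition: hasse invariant map is surjective} and \Cref{thm: serre weight conjecture}, contradicting nilpotence; but as written the step is a gap.) Note also that the paper's proof of the cokernel statement in the peu ramifi\'ee case goes through \Cref{lemma: ribets lemma} via Poincar\'e duality, which is the ingredient your sketch would still need in some form.
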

\begin{proof}
    The proposition follows from \Cref{proposition: hasse invariant map is surjective} and \Cref{lemma: ribets lemma} by Poincar\'e duality.
    Let us spell out the details.
    Again, we may assume that $s = 0$.
    Given a $\Tbb(pN)$-module $M$, let us write $M^*$ for the base change of $M$ along the ring isomorphism $\Tbb(pN) \to \Tbb(pN)$ mapping $T_\ell \mapsto S_\ell^{-1} T_\ell$ and $S_\ell \mapsto S_\ell^{-1}$.
    Thus, $\rho$ contributes to $M$ if and only if $\rho^\vee \otimes \varepsilon^{-1}$ contributes to $M^*$.
    Let us also write $M^*_\rho := (M^*)_\rho$.
    As $\rho$ is irreducible, there are Poincar\'e duality isomorphisms
    \begin{align*}
        H_1(\Gamma_1(N), k)_{\rho} & \simto H^1(\Gamma_1(N), k)^*_{\rho} \\
        H_1(\Gamma_1(N), \Sym^{p-1} (k^2)^\vee)_{\rho} & \simto H^1(\Gamma_1(N),  \Sym^{p-1} (k^2)^\vee)^*_{\rho} \\
        H_1(\Gamma_1(N) \cap \Gamma_0(p), k)_{\rho} & \simto H^1(\Gamma_1(N) \cap \Gamma_0(p), k)^*_{\rho}.
    \end{align*}
    On the one hand, the $k$-linear dual of \Cref{eqn: map that I want to know the kernel of} is the direct sum of the pullbacks in cohomology of the maps from the modular curve of level $\Gamma_1(N) \cap \Gamma_0(p)$ to the modular curve of level $\Gamma_1(N)$ determined by $\tau \mapsto \tau$ and $\tau \mapsto p \tau$ on the upper-half plane.
    On the other hand, the map \Cref{eqn: map from cIndK2 to cIndI} induces on $p$-arithmetic homology a map
    $$
        H_1(\Gamma_1(N), k) \oplus H_1(\Gamma_1(N), k) \to H_1(\Gamma_1(N) \cap \Gamma_0(p), k),
    $$
    that is (by construction) the direct sum of the transfer homomorphisms in the homology of the modular curves above induced by the same pair of maps.
    Now, under Poincar\'e duality, pullbacks in cohomology correspond to transfer homomorphisms in homology, and thus the two maps correspond (after localising at $\rho$) under the above Poincar\'e duality isomorphism.
    
    Fix an isomorphism $\Sym^{p-1}(k^2)^\vee \simto \Sym^{p-1}(k^2)$.
    It determines also an isomorphism between $\Map(\P^1(\F_p), k)$ and its dual, and by Shapiro's lemma an isomorphism
    $$
        H^1(\Gamma_1(N) \cap \Gamma_0(p), k)
        \simto
        H^1(\Gamma_1(N), k)
        \oplus
        H^1(\Gamma_1(N), \Sym^{p-1} (k^2)^\vee)
    $$
    that is compatible under Poincar\'e duality with the similar isomorphism for homology.
    In conclusion, we have a commutative square
    $$
    \begin{tikzcd}
        H_1(\Gamma_1(N), k)_{\rho} \oplus H_1(\Gamma_1(N), k)_{\rho}
        \ar[r] \ar[d, "\sim"]
        &
        H_1(\Gamma_1(N), k)_{\rho} \oplus H_1(\Gamma_1(N),  \Sym^{p-1} (k^2)^\vee)_{\rho}  \ar[d, "\sim"]
        \\
        H^1(\Gamma_1(N), k)^*_{\rho} \oplus H^1(\Gamma_1(N), k)^*_{\rho}
        \ar[r]
        &
        H^1(\Gamma_1(N), k)^*_{\rho} \oplus H^1(\Gamma_1(N),  \Sym^{p-1} (k^2)^\vee)^*_{\rho}
    \end{tikzcd}
    $$
    where the top horizontal map is obtained from taking the $p$-arithmetic homology of \Cref{eqn: map from cIndK2 to cIndI} and the bottom horizontal map is (the Poincar\'e dual of) the $k$-linear dual of \Cref{eqn: map that I want to know the kernel of}.
    Thus, the map $H_1(\Gamma_1(N), k)_{\rho} \to H_1(\Gamma_1(N), \Sym^{p-1} (k^2)^\vee)_{\rho}$ from the statement of the lemma corresponds under these isomorphisms to the dual of the map from \Cref{proposition: hasse invariant map is surjective}.
    Thus, dualising \Cref{proposition: hasse invariant map is surjective} and \Cref{lemma: ribets lemma} gives the result.
\end{proof}

\section{Proof of \texorpdfstring{\Cref{thm: 1}}{Theorem 1.1}}

In \Cref{subsection: proof generic} we have proven the generic case of \Cref{thm: 1}, in this section we are going to deal with the non-generic cases of twists of the trivial and Steinberg representations.
We assume throughout that $p \geq 5$.

\subsection{The Steinberg case}
\label{subsection: steinberg case}

The case of twists of the Steinberg representation will follow from \Cref{proposition: representations in weights 0 and p-1} and \Cref{proposition: steinberg map is zero} and some formal algebraic manipulations.
Consider a two-term complex $C_1 \to C_0$ of representations of $G$ such that $H_0(C_\bullet) \simeq H_1(C_\bullet)$.
Taking the $p$-arithmetic hyperhomology of this complex induces two spectral sequences converging to the same abutment, one $E$ with $E^2$ page given by
$$
    E^2_{i,j} = H_i(\Gamma^p_1(N), H_j(C_\bullet))
$$
and the other ${}' E$ with ${}' E^1$ page given by
$$
    {}' E^1_{i,j} = H_j(\Gamma^p_1(N), C_i).
$$
This spectral sequence degenerates at the ${}'E^2$ page, so the systems of Hecke eigenvalues appearing in this page are the same as those in the abutment.
A spectral sequence argument shows that these systems of Hecke eigenvalues are the same as those appearing in $E^2$.
Indeed, if $i_0$ is the smallest degree for which a fixed system of eigenvalues appears in $H_{i_0}(\Gamma^p_1(N), H_0(C_\bullet))$, then the localisation at this system of eigenvalues of the $E_{i_0, 0}$ term is stable in the localised spectral sequence, so the localisation of the abutment in degree $i_0$ will be non-zero.
Moreover, if the homologies $H_j(\Gamma^p_1(N), C_i)$ are finite-dimensional, then so is the abutment, and the same type of spectral sequence argument shows that so are the terms in $E^2_{i,j}$.

Let us now specialise to our case of interest. The complex we will be considering is given by the map
$$
    C_1 := \pi(0, 1, \chi) \to \pi(p-1, 1, \chi) := C_0
$$
from \Cref{lemma: map that factors through trivial}, so that $H_0(C_\bullet) \simeq H_1(C_\bullet) \simeq \St \otimes \chi$.
Thus, the previous paragraph and \Cref{proposition: representations in weights 0 and p-1} show that \Cref{thm: 1} \Cref{item: thm 1 i} and \Cref{item: thm 1 ii} are satisfied for twists of the Steinberg representation.
Let us analyse the systems of eigenvalues appearing in homology.
\Cref{proposition: representations in weights 0 and p-1} shows that the only odd irreducible Galois representations $\rho$ which can contribute to the $'E^1$ page above, and hence to $H_*(\Gamma^p_1(N), \St \otimes \chi)$, are those such that $N(\rho)$ divides $N$ and $\rho|_{\Gcal_p}$ is an extension of $\chi \omega^{-1}$ by $\chi$.
Fix such a $\rho$ and write $\chi = \omega^a \unr{b}$.
By the argument in \cite[Proposition 5.8]{S_arithmetic_cohomology}, and using that derived tensor products commute with mapping cones, the $p$-arithmetic hyperhomology of $C_\bullet$ is isomorphic (in the derived category of $\Tbb(pN) \otimes_\Z k[T, S]$-modules) to the derived tensor product over $k[T,S]$ of $k[T,S]/(T-b, S-b^2)$ and
\begin{align*}
    & \left[
        C_\bullet(\Gamma_1^p(N), \cInd_K^G(\omega^a))
        \to
        C_\bullet(\Gamma_1^p(N), \cInd_K^G(\Sym^{p-1}(k^2)^\vee \otimes \omega^a))
    \right] \\
    & \simeq
    \left[
        C_\bullet(\Gamma_1(N), \omega^a \circ \det)
        \to
        C_\bullet(\Gamma_1(N), \Sym^{p-1}(k^2)^\vee \otimes \omega^a)
    \right],
\end{align*}
where $\left[ \blank \right]$ denotes mapping cones and the isomorphism follows from Shapiro's lemma \cite[Proposition 5.3]{S_arithmetic_cohomology}.
Using that $\rho$ contributes to arithmetic homology only in degree 1, we see that the localisation of the above hyperhomology at $\rho$ is
\begin{align*}
    &
    \frac{k[T,S]}{(T-b, S-b^2)} \otimesL_{k[T,S]}  \left[
        H_1(\Gamma_1(N), \omega^a \circ \det)_\rho[1]
        \to
        H_1(\Gamma_1(N), \Sym^{p-1}(k^2)^\vee \otimes \omega^a)_\rho[1]
    \right] \\
    & \simeq
    \frac{k[T,S]}{(T-b, S-b^2)} \otimesL_{k[T,S]} U_{a, \rho}[1]
\end{align*}
where the map in the first line is that of \Cref{proposition: steinberg map is zero} and
$$
    U_a = \coker \left( H_1( \Gamma_1(N), \omega^a \circ \det ) \to H_1( \Gamma_1(N), \Sym^{p-1} (k^2)^\vee \otimes \omega^a ) \right).
$$
In conclusion, the abutment of the localisation spectral sequences $E$ and $'E$ is given in degree $i$ by
$$
     \Tor^{k[T,S]}_{i-1} \left( \frac{k[T,S]}{(T-b, S-b^2)}, U_{a, \rho} \right)
$$
In particular, analysing the $E^2$ page shows that
\begin{align*}
    H_0(\Gamma^p_1(N), \St \otimes \chi)_\rho & = H_3(\Gamma^p_1(N), \St \otimes \chi)_\rho = 0, \\
    H_1(\Gamma^p_1(N), \St \otimes \chi)_\rho & \simeq \frac{k[T, S]}{(T-b, S - b^2)} \otimes_{k[T,S]} U_{a,\rho}, \\
    H_2(\Gamma^p_1(N), \St \otimes \chi)_\rho & \simeq \Hom_{k[T,S]} \left( \frac{k[T, S]}{(T-b, S - b^2)}, U_{a, \rho} \right).
\end{align*}
Moreover, \Cref{proposition: steinberg map is zero} shows that the homology in degrees 1 and 2 is always non-zero, since $\rho$ contributes to $U_a$ and can only contribute to its $(T=b, S = b^2)$-eigenspace by \cite[Theorem 2.5]{edixhoven_weight}.
Together with \Cref{proposition: socle llc generic reducible}, this completes the proof of \Cref{thm: 1} \Cref{item: thm 1 iii} in this case.

\subsection{The trivial case}

The case of twists of the trivial representation is completely analogous to the case of twists of the Steinberg representation, this time applying the above analysis to
$$
    C_1 := \pi(p-1, 1, \chi) \to \pi(0, 1, \chi) := C_0,
$$
where the map is that of \Cref{lemma: map that factors through steinberg}.
It satisfies $H_0(C_\bullet) \simeq H_1(C_\bullet) \simeq \chi \circ \det$, so \Cref{thm: 1} \Cref{item: thm 1 i} and \Cref{item: thm 1 ii} hold for these representations.
The corresponding spectral sequences and \Cref{proposition: hasse invariant map is surjective} show that if $\chi = \omega^a \unr{b}$ and $\rho$ is an odd irreducible representation of $\Gal(\bar \Q / \Q)$,
\begin{align*}
    H_0(\Gamma^p_1(N), \chi \circ \det)_\rho & = H_1(\Gamma^p_1(N), \chi \circ \det)_\rho = 0, \\
    H_2(\Gamma^p_1(N), \chi \circ \det)_\rho & \simeq \frac{k[T, S]}{(T-b, S - b^2)} \otimes_{k[T,S]} V_{a,\rho}, \\
    H_3(\Gamma^p_1(N), \chi \circ \det)_\rho & \simeq \Hom_{k[T,S]} \left( \frac{k[T, S]}{(T-b, S - b^2)}, V_{a, \rho} \right).
\end{align*}
where
$$
V_{a} := \ker \left( H_1( \Gamma_1(N), \Sym^{p-1} (k^2)^\vee \otimes \omega^a ) \to H_1( \Gamma_1(N), \omega^a \circ \det ) \right).
$$
By \Cref{lemma: ribets lemma} (and \cite[Corollary 2.11]{buzzard_diamond_jarvis}), the odd irreducible Galois representations $\rho$ contributing to the $(T=b, S=b^2)$-eigenspace of $V_a$ are those such that $N(\rho)$ divides $N$ and $\rho|_{\Gcal_p}$ is an extension of $\chi \omega^{-1}$ by $\chi$.
These representations therefore appear in the $p$-arithmetic homology of $\chi \circ \det$ exactly in degrees 2 and 3.
There are no Galois representations satisfying condition \Cref{item: thm 1 iii a} of \Cref{thm: 1} \Cref{item: thm 1 iii}, as finite-dimensional representations never appear in the socle of representations in the image of the mod $p$ local Langlands correspondence for $\GL_2(\Q_p)$.
Therefore, \Cref{thm: 1} \Cref{item: thm 1 iii} holds, which completes the proof of \Cref{thm: 1}.

\begin{remark}
    In the proof of \Cref{proposition: steinberg map is zero} we showed using Poincar\'e duality for arithmetic cohomology that $U_a = (V_{-a}^\vee)^*$ (with the notation introduced in that proof).
    The Poincar\'e duality isomorphisms in that proof intertwine $T$ with $S^{-1}T$ and $S$ with $S^{-1}$.
    This, together with the above computations, implies that for $\rho$ as above there are ``Poincar\'e duality" isomorphisms
    \begin{align*}
        H^i(\Gamma_1^p(N), \chi \circ \det)_\rho & \simto H_{4-i} (\Gamma_1^p(N), \St \otimes \chi)^*_\rho.
    \end{align*}
\end{remark}

\begin{remark}
    Assume that $\chi$ is the trivial character for simplicity. Then, the fact that Galois representations as above contribute to $H_*(\Gamma_1^p(N), k)$ and $H^*(\Gamma_1^p(N), k)$ but not in degree 1 (unlike in the other cases) is related to the fact that, if $\pi$ is the smooth representation of $\GL_2(\Q_p)$ corresponding to $\rho|_{\Gcal_p}$ under the mod $p$ Langlands correspondence, then $\Hom_{k[G]}(k, \pi) = 0$, but $\Ext^1_{k[G]}(k, \pi) \neq 0$ (at least when $\rho|_{\Gcal_p}$ is non-split).
    This can be made precise by relating $p$-arithmetic cohomology to completed cohomology and taking into account Emerton's local-global compatibility results \cite{emerton_local_global}, but we do not pursue this here.
\end{remark}

\bibliographystyle{alpha}
\bibliography{references}

\newcommand{\etalchar}[1]{$^{#1}$}
\begin{thebibliography}{CEG{\etalchar{+}}18}

\bibitem[BDJ10]{buzzard_diamond_jarvis}
Kevin Buzzard, Fred Diamond, and Frazer Jarvis.
\newblock On {S}erre's conjecture for mod {$\ell$} {G}alois representations
  over totally real fields.
\newblock {\em Duke Math. J.}, 155(1):105--161, 2010.

\bibitem[BL94]{barthel_livne}
L.~Barthel and R.~Livn\'{e}.
\newblock Irreducible modular representations of {${\rm GL}_2$} of a local
  field.
\newblock {\em Duke Math. J.}, 75(2):261--292, 1994.

\bibitem[Bre03]{breuil_sur_quelques}
Christophe Breuil.
\newblock Sur quelques repr\'{e}sentations modulaires et {$p$}-adiques de
  {${\rm GL}_2(\mathbf Q_p)$}. {I}.
\newblock {\em Compositio Math.}, 138(2):165--188, 2003.

\bibitem[CEG{\etalchar{+}}18]{patching_gl2}
Ana Caraiani, Matthew Emerton, Toby Gee, David Geraghty, Vytautas
  Pa\v{s}k\={u}nas, and Sug~Woo Shin.
\newblock Patching and the {$p$}-adic {L}anglands program for {${\rm
  GL}_2(\mathbb Q_p)$}.
\newblock {\em Compos. Math.}, 154(3):503--548, 2018.

\bibitem[Col10]{colmez_representations}
Pierre Colmez.
\newblock Repr\'{e}sentations de {${\rm GL}_2(\mathbf Q_p)$} et
  {$(\phi,\Gamma)$}-modules.
\newblock {\em Ast\'{e}risque}, (330):281--509, 2010.

\bibitem[Edi92]{edixhoven_weight}
Bas Edixhoven.
\newblock The weight in {S}erre{\textquotesingle}s conjectures on modular
  forms.
\newblock {\em Inventiones Mathematicae}, 109(1):563--594, dec 1992.

\bibitem[EK03]{edixhoven_khare_hasse}
Bas Edixhoven and Chandrashekhar Khare.
\newblock Hasse invariant and group cohomology.
\newblock {\em Doc. Math.}, 8:43--50, 2003.

\bibitem[Eme]{emerton_local_global}
Matthew Emerton.
\newblock Local-global compatibility in the $p$-adic {L}anglands programme for
  {${\rm GL}_{2/\Q}$}.

\bibitem[Gro90]{gross_tameness}
Benedict~H. Gross.
\newblock A tameness criterion for {G}alois representations associated to
  modular forms (mod {$p$}).
\newblock {\em Duke Math. J.}, 61(2):445--517, 1990.

\bibitem[KS12]{kohlhaase_schraen}
Jan Kohlhaase and Benjamin Schraen.
\newblock Homological vanishing theorems for locally analytic representations.
\newblock {\em Math. Ann.}, 353(1):219--258, 2012.

\bibitem[Pa{\v s}13]{paskunas_image}
Vytautas Pa{\v s}k{\=u}nas.
\newblock The image of {C}olmez's {M}ontreal functor.
\newblock {\em Publ. Math. Inst. Hautes \'{E}tudes Sci.}, 118:1--191, 2013.

\bibitem[Ser80]{serre_trees}
Jean-Pierre Serre.
\newblock {\em Trees}.
\newblock Springer-Verlag, Berlin-New York, 1980.
\newblock Translated from the French by John Stillwell.

\bibitem[Ser87]{serre_modularity}
Jean-Pierre Serre.
\newblock Sur les repr\'{e}sentations modulaires de degr\'{e} {$2$} de
  {$\mathrm{Gal}(\overline{\mathbf{Q}}/\mathbf{Q})$}.
\newblock {\em Duke Math. J.}, 54(1):179--230, 1987.

\bibitem[Tar22]{S_arithmetic_cohomology}
Guillem Tarrach.
\newblock {$S$}-arithmetic (co)homology and $p$-adic automorphic forms, 2022.
\newblock Preprint available at \url{https://arxiv.org/abs/2207.04554v1}.

\bibitem[Wil95]{wiles_fermat}
Andrew Wiles.
\newblock Modular elliptic curves and {F}ermat's last theorem.
\newblock {\em Ann. of Math. (2)}, 141(3):443--551, 1995.

\end{thebibliography}

\end{document}